\newtheorem{theorem}{Theorem}[section]
\newtheorem{proposition}[theorem]{Proposition}
\newtheorem{remark}[theorem]{Remark}
\newtheorem{definition}[theorem]{Definition}
\newcommand{\R}{\mathbb R}
\newcommand{ \PP}{\mathbb P}
\newcommand{\D}{\mathbb D}
\renewcommand{\SS}{\mathcal{S}}
\newcommand{\diff}[1]{{\mathrm{d}{#1}}}
\newcommand{\IdxM} {\mathbb{I}}
\newcommand{\diag}{\mathop{\mathrm{diag}}}
\newcommand{\bbf}{{\mathbf {f}}}
\newcommand{\bn}{{\mathbf {n}}}
\newcommand{\s}{\sqrt{2}}
\newcommand{\is}{\frac{\sqrt{2}}{2}}
\newcommand{\dpar}[2]{\dfrac{\partial #1}{\partial #2}}
\newcommand{\bF}{\mathbf{F}}
\newcommand{\ba}{\mathbf{a}}
\newcommand{\bx}{\mathbf{x}}
\newcommand{\bs}{\boldsymbol}
\renewcommand{\div}{\operatorname{div}}
\newcommand{\est}[1]{\left\langle#1\right\rangle}
\newcommand{\bU}{\mathbf{U}}
\newcommand{\1}{\begin{pmatrix}
                 1\\
                 1
                \end{pmatrix}}
\newcommand{\e}[1]{\mathrm{e^{#1}}}
\newcommand{\VV}{\mathcal{V}}
\newcommand{\n}{\mathrm{n}}
\renewcommand{\vec}[1]{\underline{#1}}
\NewDocumentCommand{\mat}{mo}{%
  \IfValueTF{#2}{%
    \underline{\underline{#1}}{#2}
  }{%
    \underline{\underline{#1}}\,
  }%
}
\newcommand{\tu}{\tilde{u}}
\definecolor{darkspringgreen}{rgb}{0., 0.55, 0.3}
\definecolor{dartmouthgreen}{rgb}{0.05, 0.5, 0.06}
\definecolor{etonblue}{rgb}{0.59, 0.78, 0.64}
\definecolor{airforceblue}{rgb}{0., 0.4, 0.66}
\definecolor{arylideyellow}{rgb}{0.91, 0.84, 0.42}
\definecolor{emerald}{rgb}{0.31, 0.78, 0.47}
\definecolor{uclagold}{rgb}{1.0, 0.7, 0.0}
\definecolor{cadmiumorange}{rgb}{0.93, 0.53, 0.18}
\newcommand{\remi}[1]{\textcolor{black}{{#1}}}
\newcommand{\rev}[1]{\textcolor{black}{#1}} %Reviewer 2
\newcommand{\revs}[1]{\textcolor{black}{#1}}  %Reviewer 3
\newcommand{\revt}[1]{\textcolor{black}{#1}}  %Reviewer 1
\newcommand{\revb}[1]{\textcolor{red}{#1}}
\begin{document}
\title{{Analysis of the SBP-SAT Stabilization for Finite Element Methods Part I:   Linear Problems}}

\author[$\dagger$]{R. Abgrall}
\author[$\star \star$]{J. Nordstr\"om}
\author[$ \star$]{P. \"Offner\thanks{Corresponding author: P. \"Offner, philipp.oeffner@math.uzh.ch}}
\author[$\ddag $]{S. Tokareva}
\affil[$\dagger$]{Institute of Mathematics,
University of Zurich, Winterthurerstrasse 190
CH-8057 Zurich, Switzerland}
\affil[$\star \star $]{Department of Mathematics, Computational Mathematics, Link\"oping University, SE-581 83 Link\"oping, Sweden}
\affil[$\star \star$]{Department of Mathematics and Applied Mathematics, University of Johannesburg, P.O. Box 524, Auckland Park 2006, South Africa}
\affil[$\star$]{Institute of Mathematics,
Johannes Gutenberg-Universtiy, Staudingerweg 9,
55099 Mainz, Germany}
\affil[$\ddag $]{Theoretical Division, Applied Mathematics and Plasma Physics Group (T-5), Los Alamos National Laboratory, Los Alamos, NM 87545 USA}

\date{%5October 15th, 2017
\today}
\maketitle
%\linenumbers
\begin{abstract}

In the hyperbolic community,  discontinuous Galerkin (DG) approaches are mainly 
applied when finite element methods are considered.
As the name suggested, the DG framework allows a  discontinuity at the element interfaces, which seems for many researchers a favorable property in case of hyperbolic balance laws.
On the contrary, continuous Galerkin (CG) method \remi{obtained from a straightforward discretisation of the weak form of the PDEs}
 appear to be unsuitable for hyperbolic problems.
 To remedy this issue, 
stabilization terms are usually added and  various formulations can be found in 
the literature. \remi{There exists still the perception that continuous Galerkin methods
are not suited to hyperbolic problems, and the reason of this is the continuity of the approximation.} However, this perception is not true and the  stabilization terms 
\remi{can be removed}, in general, \remi{provided the boundary conditions are suitable.}
 In this paper, we deal with this problem, \remi{and} present a 
different approach. We use  the boundary conditions to stabilize the scheme following 
a procedure that  are frequently used 
in the finite difference community. 
Here, the main idea is to impose the boundary conditions weakly and  \remi{\emph{specific}} boundary operators are constructed  such that they   guarantee stability.
This  approach has already been used in the 
discontinuous Galerkin framework, but here we apply it with  a continuous Galerkin scheme.  No internal dissipation is needed even if   unstructured grids are used. Further, we  point out that we do not need exact integration, it suffices if the quadrature rule and the norm in the differential operator are the same,
\revt{such that  the summation-by-parts (SBP) property is fulfilled  meaning that a discrete Gauss Theorem is valid}.
 This contradicts 
the perception in the hyperbolic community that  stability  issues  for pure Galerkin scheme exist. 
In numerical simulations, we verify our theoretical analysis.

%A pure Galerkin scheme is notoriously unstable. To remedy this issue, 
%stabilization terms are usually added and  various formulations can be found in 
%the literature.  In this paper, we are also dealing with this problem, but present a 
%different approach. We use  the boundary conditions in our investigation in the sense 
%that so called simultaneous approximation terms (SATs) are applied which are frequently used 
%in the finite difference community. 
%Here, the main idea is to impose the boundary conditions weakly.
%Specific boundary operators are constructed  which  guarantee stability. The SAT approach has already been used in the 
%discontinuous Galerkin framework, but here we apply it -up to our knowledge- for the first time together with 
%a continuous Galerkin scheme. We demonstrate that a pure continuous Galerkin scheme 
%is stable if the boundary conditions are implemented in the correct way. This contradicts 
%the general perception of stability problems for pure Galerkin schemes. 
%In numerical simulations, we verify our theoretical analysis. 

\end{abstract}
%\tableofcontents

\section{Introduction}\label{se:Intro}

In recent years, significant efforts have been made to construct
and develop high-order methods for hyperbolic balance 
laws, and most of the  methods are either based on  
finite difference (FD) or finite element (FE) approaches.
In the FE framework, one favorable, if not the most favorable scheme, 
seems to be the discontinuous Galerkin (DG) method introduced by Reed and Hill \cite{reed1973triangular}
because of its stability properties
\cite{cockburn2012discontinuous, hesthaven2002nodal, chenreview, gassner2013skew}.
Many modern DG formulations are based on summation-by-parts (SBP) operators and the recent stability proofs rely on the SBP property
\cite{gassner2013skew, carpenter2014entropy, chen2017entropy, chan2018discretely, 
kopriva2014energy,  ranocha2016summation}.
\rev{Even, if the SBP operators where originally \revt{defined in} the FE framework, they have been transferred
to FD methods \cite{kreiss1974finite} and have been further developed in the FD setting
where they are now commonly used. They lead  to stability following the steps of the continuous energy analysis  \cite{fernandez2014review, hicken2016multidimensional, svard2014review}.}
Together with SBP operators, Simultaneous Approximation Terms (SATs) 
that impose the boundary conditions weakly are   applied. 
The SBP-SAT technique is  powerful and universally applicable as we will show in this paper. 
Another  reason for the popularity of DG is that the numerical solution is allowed to have a discontinuity at the element boundaries,
and, since non-linear hyperbolic problems are supporting shocks, this property is believed to be 
desirable. In addition and maybe  most important, the DG methods leads to block diagonal mass matrices which are easy to invert. %In our view, these reasons make DG  popular in the community.
The difference between a DG approach and continuous Galerkin (CG), besides the structure of the mass matrix, is that in CG 
the approximated solution is forced to be continuous also over the element boundaries.
This restriction  is perceived to be quite strong also in terms of stability
where the erroneous (as we will show) belief in the  hyperbolic research community exists,
that a pure CG scheme is unstable\footnote{We like to mention that also parts of the authors 
had this belief before starting the project.},
and   \remi{stabilization terms have to be  applied to remedy this issue}
\cite{abgrall2019high}.\\
One may only speculate where this erroneous perception come from?
In our opinion, one major reason could be that 
if one considers a pure Galerkin method  using a linear Lagrange polynomial basis of order one,
it can be shown that the method is equivalent to the 3-point central difference scheme.
This scheme is  not von Neumann stable \cite{zbMATH06207795} when periodic boundary conditions are considered. 
By switching the basis functions for instance to splines and / or  
some lumping technique, von Neumann stability can be proven, see
\cite{thomee1974convergence, mock1976explicit}. We want to point out that with the lumping technique as described in 
\cite{mock1976explicit}, one is able to re-write the Galerkin method to  well-known finite difference schemes   like  Lax-Friedrichs or Lax-Wendroff schemes and at the end, a stable  finite difference scheme\footnote{Actually, the opposite is also true: Everything is a finite element scheme with a suitable quadrature rule.}. 
In addition, if one considers initial-boundary value problems,
there also exist some preliminary stability results  \cite{layton1983stable, layton1983stable1, gunzburger1977stability}. Here, the main idea is to switch the norms of the trial space and include the procedure at the boundary.  However, these results seems forgotten in the hyperbolic community.  \\
%Even there exist some preliminary stability results including the procedure at the boundary where the main idea is to 
%switch the norm of the trial space.
%However, these results may seem forgotten in the community.
%The difference between a DG approach and continuous Galerkin (CG), besides the structure of the mass matrix, is that in CG 
%the approximated solution is forced to be continuous also over the element boundaries
%and this restriction seems to be quite strong also in terms of stability
%where a common belief in the  \rev{hyperbolic} research community is 
%that a pure CG scheme is notoriously unstable\footnote{We like to mention that also parts of the authors 
%had this belief before starting the project.}. 
%Therefore, stabilization terms have been developed and are frequently applied to remedy this issue
%\cite{abgrall2019high, burman2010explicit, burman2004edge}.
%Even there exist some preliminary stability results \cite{layton1983stable, layton1983stable1, gunzburger1977stability}
%including the procedure at the boundary where the main idea is to 
%switch the norm of the trial space.
%However, these results may seem forgotten in the community.
In this paper, we  focus on the stability property of a pure Galerkin scheme, but follow 
a different approach. Our preliminary idea/thought is:
If one considers the DG method with one element,
the method is stable. 
%Simultaneously, we attain in the CG formulation where the belief is that the method is unstable.
There is nothing that says that the approximation space must be a broken polynomial space, 
the only thing that is needed is that the trial and test function must have some kind of regularity within the elements,
so that the divergence theorem (or SBP techniques) can be applied. 
\rev{Continuity at the boundaries and regularity inside the elements due to the polynomial space are enough.
No internal  artificial  dissipation is required} \revs{and no 
special conditions on the grid structure, for instance cartesian grids, have to be assumed.  Thus, for example  unstructured triangular  meshes
can be applied.}
Hence, one can see a CG method as a DG one, with only one element 
(the union of the simpleces) with an approximation space made of polynomials with continuity requirement between the simpleces.
Hence, what is the difference   between these two approaches? 
The answer  to this question points to the procedure at the boundary. 
In the stability proofs, the use of SATs is essential. 
%In  \cite{hicken2016multidimensional,hicken2019entropy} 
%diagonal-norm stable CG SBP-SAT discretizations have previously been published, but 
%SATs have never been used together with general pure CG discretizations with dense norms   and this is the topic of this paper
%where we show that no internal dissipation is needed in CG methods.
In  \cite{hicken2016multidimensional} 
diagonal norm stable CG SBP-SAT discretizations have previously been presented and
further extended in \cite{hicken2019entropy, hicken2020entropy} where local projection stabilizations are applied to obtain 
entropy stable discretizations. The focus lies especially on the construction and investigation of diagonal norm SBP operators.
Contrarily, in this work we focus on SAT \revt{and apply Galerkin schemes which fulfill the SBP property meaning that a discrete Gauss theorem is valid}.  We apply them with pure 
CG discretizations with dense norms and this is the topic of this paper
where we show that no internal dissipation is needed in CG methods.
% We  demonstrate that a pure continuous Galerkin scheme is stable 
% if the boundary procedure is done in proper way. Therefore, stability is guaranteed only through the applied 
% boundary operators. This contradicts the common belief about continuous Galerkin.
We divide the paper as follows:
In the second section, we  shorty introduce the continuous Galerkin scheme which is used and investigated in the following.
Next, we introduce and repeat the main idea of the SAT procedure from the FD framework 
and extend it to the Galerkin approach. We show that
 the determination of the boundary operators is essential. In section \ref{sec:boundary_operator},
we focus on the eigenvalue analysis of the spatial operators and derive  conditions
from the continuous setting to build adequate boundary operators in the discrete framework. 
We give some recipes which will be used in section \ref{sec:Numerics} to support our analysis in numerical experiments. Finally, we conclude  and discuss future work.

% \revPP{Ideas}
% \begin{enumerate}
% \item Motivation Entropy stability Gassner Shu in DG + Nordström SAT 
%  \item SBP-SAT used in DG( Shu, Gassner)
%  \item Basic idea of SAT (FD Community)
%  \item PRobelms with Continuous Galerkin not stable-> Solve the problem
%  \item Describe how it has to be done1
% \end{enumerate}

% 
% \revPP{Changing/ Ideas}:
% \begin{itemize}
%  \item Linear case for system repeating???
%  \item Putting the Galerkin framework inside the SAT terms- > Focussing on the finite difference approach
%  \item  Maybe chapter 6-> Implemenation 
%  \item Numerics chapter- > Several examples , Rotation (with error), Advection in two dimension -> Behaviour when the quadrature is not exact, 
%  variable coefficients, skew symmetric formulation (maybe), burgers stable, and cosinus test case stable!
%  R13 problem from Torillion (Checking-B2/B3-> Lagrangian, Bernstein-> ), wave equation-> 
%  \item Convergence ->
%  \item $Q+Q^T$ investigating numerically, also with periodic boundary conditions. 
%  Should be in the right axis. -> Checking 
%  \item In the appendix 
%  further techniques to estimate the boundary operator can be found-> Have to be adapted or deleted or what ever??
%  \item Mass matrix shoulld inverterible exact and if it is not exact it can destroy the properties....$Q+Q^T$
%  \item Non-linear in a new paper.... 
% \end{itemize}
% 

%\input{2_Residual_Distribution}
\section{Continuous Galerkin  Scheme}\label{eq:Finite_element}

In this section, we shortly introduce the pure continuous Galerkin scheme (CG)
as it is also known in the \remi{literature
\cite{kreiss1974finite, Hughes1, burman2010explicit,   burman2004edge}.}
We are interested in the numerical approximation of a hyperbolic problem 
\begin{equation}\label{eq:conservation_law_general}
 \frac{\partial U}{\partial t}+\div f(U)=0
\end{equation}
with suitable initial and boundary conditions. 
The domain $\Omega$ is split into subdomains $\Omega_h$ 
(e.g triangles/quads in two dimensions, tetrahedrons/hex in 3D). 
We denote by $K$ the generic element of the mesh and by $h$ the characteristic mesh size. 
Then, the degrees of freedom (DoFs) $\sigma$  are defined 
in each $K$:   we have  a set of linear forms acting on the set
$\PP^k$ of polynomials of degree $k$ such that the linear mapping 
$q\in \PP^k\longmapsto (\sigma_1(q),\cdots, \sigma_{|\sum_K|}(q))$ is one-to-one,
\rev{ where $|\sum_K|$ denotes the total number of DoFs in $K$}. The set 
$\SS$ denote the set of degrees of freedom in all elements. 
The solution $U$ will be approximated by some element from the space $\VV^h$
defined by 
\begin{equation}\label{eq:solution_space}
\VV^h:=\rev{\bigoplus_{K} \left\{ U^h|_K \in \PP^k  \cap C^0(\Omega) \right\} .}
\end{equation}
A linear combination of basis functions $\varphi_\sigma\in \VV^h$ 
will be used to describe the numerical solution
\begin{equation}\label{eq:solution_approx}
U^h( x)=\sum_{K\in \Omega_h }\sum_{\sigma \in K} U_{\sigma}^h(t)  \varphi_{\sigma}|_K( x), \quad \forall{ x \in \Omega}
\end{equation}
 As basis functions we are working either with Lagrange interpolation 
where the degrees of freedom are associated to points in $K$  or   B\'{e}zier polynomials. \\
To start the discretisation, we
apply a Galerkin approach and multiply with a test function $V^h$  and integrate over the domain.
This gives
\begin{equation}\label{eq:galerkin7}
 \int_{\Omega} (V^h)^T \frac{\partial U}{\partial t}\diff x+ \int_{\Omega} (V^h)^T \div f(U) \diff{x} =0.
\end{equation}
Using the divergence theorem, we get
\begin{equation}\label{eq:galerkin}
 \int_{\Omega} (V^h)^T \frac{\partial U}{\partial t}\diff x- \int_{\Omega} (\nabla V^h)^T f(U) \diff{x} +\int_{\partial \Omega} \big (V^h\big )^Tf(U)\cdot \n \;d\gamma =0.
\end{equation}
%The pure Galerkin formulation of \eqref{eq:galerkin} is simply: find $U^h \in \VV^h$ that for any 
%$V^h \in \VV^h$ we have:
%\begin{equation*}
% \oint_{\Omega} (V^h)^T \frac{\partial U^h}{\partial t} \diff x+ \oint_{\Omega} (V^h)^T \div f(U^h) \diff{x} =0.
%\end{equation*}
%where $\oint$ represents a quadrature rule. 
By choosing $V^h=\varphi_{\sigma}$ for any $\sigma \in \SS$,  where we further assume 
for simplicity that 
our basis functions vanishes at the physical boundaries, we obtain with  \eqref{eq:solution_approx} a system of equations: 
\begin{equation}\label{zob}
 \sum_{K\in \Omega_h }\sum_{\sigma' \in K} \bigg (
 \frac{\partial U_{\sigma'}^h(t)}{\partial t} \int_K\varphi_{\sigma'}(x) \varphi_\sigma(x) \diff x
- \int_{K}  \nabla\varphi_{\sigma'}(x)\; f(U^h) \diff{x}\bigg )  =0,
\end{equation}
Our approach makes \eqref{zob} fully explicit with special focus on the basis functions.
The contributions from the boundary integral are removed due to the assumption
that $\varphi|_{\partial \Omega}=0$. 
Our motivation for this simplification is driven by the fact that we want to avoid 
a discussion about boundary conditions in this part which will be the content of the followings sections.  The internal contributions cancel out through 
the different signs in the normals. 
In practice, we compute  \eqref{zob} with a quadrature rule:
\begin{equation*}
 \sum_{K\in \Omega_h }\sum_{\sigma' \in K} \bigg (
 \frac{\partial U_{\sigma'}^h(t)}{\partial t} \; \oint_K\varphi_{\sigma'}(x) \varphi_\sigma(x) \diff x
- \oint_{K}  \nabla\varphi_{\sigma'}(x)\; f(U^h) \diff{x} \bigg ) =0,
\end{equation*}
where $\oint$ represents the quadrature rules for the volume and surface integrals. 

 In this paper,  we are considering  linear problems, i.e. the flux is linear in $U$, but may depend on the spatial coordinate. In all the numerical experiences, we will make the spatial dependency simple enough (i.e. typically polynomial in $x$), so that it will always be possible to find a standard quadrature formula and obtain accurate 
 approximations for the integrals. Note, if the quadrature rule is accurate enough, \eqref{zob} can be 
 exactly reproduced for linear problems with constant coefficients.
  
% that make the formula \emph{exact}. In other words, in this paper we proceed such that \eqref{zob} is always exactly reproduced, unless it is specified.
 
% into \todo{should be checked also in terms of the matrix and Hadamard product}
%\begin{equation}\label{eq:last_FE}
% \sum_{K\in \Omega_h }\sum_{\sigma \in K} \left(
% \frac{\partial U_{\sigma}^h(t)}{\partial t}
% \left(\oint_K \varphi_{\sigma_1}(x) \varphi_\sigma(x)\right) \diff x 
%-    \sum_{i=1}^d\left( a_{\sigma,i} \circ  U_\sigma^h(t) \right)
%\oint_{K} \varphi_{\sigma_1}(x) 
% \frac{\partial }{\partial x_i} \varphi_\sigma(x) \diff{x}\right)  =0
%\end{equation}
%with $\circ$ represents the Hadamard product and $d$ the number of space variables. 

Using a matrix formulation,
we obtain the classical FE formulation:
 \begin{equation}\label{eq:finite_semidiscrete}
  \mat{M}\frac{\partial}{\partial t}\vec{U}^h +\mat{\bF}=0
 \end{equation}
 where $\vec{U}^h$ denotes the vector of degrees of freedom, $\mat{\bF}$ 
 is the approximation of $\div f$  and 
 $\mat{M}$ is a mass matrix
 \footnote{In the finite difference community $\mat{M}$ is called norm matrix and is classically 
 abbreviated with $P$, c.f. 
 \cite{svard2014review, nordstrom2006conservative}.}.
For  continuous elements, this matrix is sparse but not block diagonal, contrary to the situation for the discontinuous
 Galerkin methods.
Due to the rumor/perception in the hyperbolic community that a pure Galerkin scheme suffers from stability issues
for hyperbolic problems, \remi{it is common to add  stabilization 
 terms to the scheme as for example in \cite{burman2004edge}, and this is a very safe solution.}
 However,  as previously mentioned we take a different approach in this paper
 and will renounce these classical stabilization techniques. 
 In order to do this,  we  need more known results  from the literature,
 which we will briefly repeat here.
% Nevertheless, it should be stressed out that if the quadrature rule is accurate enough \eqref{eq:last_FE}
% is exact. 

\section{Weak Boundary Conditions}\label{sec:SAT}

To preserve the structure of the SBP operators, and facilitate proofs of stability, weak boundary conditions are preferable over strong one's. 
\subsection{SATs in SBP-FD framework}\label{subsec:SAT_FD}
To implement the boundary conditions weakly using simultaneous approximation terms (SATs) 
is nowadays standard in the FD community and has  been developed there.
Together with summation-by-parts (SBP) operators it provides a powerful tool for proofs of semidiscrete ($L_2$) stability of linear problems by the energy method, see
\cite{fernandez2014review, svard2014review, nordstrom2017roadmap} for details. 

Here, we present a short introductory example of the SBP-SAT technique as it is presented in \cite{nordstrom2006conservative, svard2014review}.
Consider the linear advection equation 
 \begin{equation}\label{eq:linear_ad}
 \begin{split}
    \frac{\partial u}{\partial t}+a \frac{\partial u}{\partial x}&=0, \quad 0\leq x\leq 1, \quad t>0,\\
    u(x,0)&=u_{in}(x),\\
    u(x,t)&=b(x,t)\text{ for  inflow boundary}
 \end{split}
 \end{equation}
where $u_{in} \in $ is the initial condition and $b$ is the \rev{known} boundary data that is only defined on the inflow part of $\partial[0,1]=\{0,1\}$. In other words, if $a>0$, then $b$ is only set for $x=0$, and if $a<0$, this will be for $x=1$ only. \\
To explain the semi-discretisation of \eqref{eq:linear_ad}, 
we consider the discrete grid $\vec{x}=(x_0, \cdots, x_N)^T$, with the ordering of nodes
$x_0=0<\cdots<x_N=1$. Furthermore, the spatial derivative of a function $\phi$ is approximated through  
a discrete derivate matrix $\mat{D}$, i.e. $\phi_x\approx \mat{D} \vec{\phi}$ with 
$\vec{\phi}=(\phi(x_0),\; u_1,\;, \cdots, \phi(x_N))^T$. It is defined by 
\begin{definition}[SBP operators]\label{de:SBP}
An operator \mat{D} is a $p$-th order accurate approximation of the first derivative on SBP form 
if 
\begin{enumerate}
\item $\mat{D}\vec{x}^j=\mat{M}^{-1}\mat{Q} \vec{x}^j= j \vec{x}^{j-1}, \; j \in [0,p] $ with $\vec{x}^j= (x_0^j, \cdots, x_N^j)^T$,
\item $\mat{M}$ is a symmetric positive definite matrix, 
 \item $\mat{Q}+ \mat{Q}^T=\mat{B}=\diag(-1,0, \cdots,0,1)$ holds. 
\end{enumerate}
\end{definition}
Now, a semi-discretisation of \eqref{eq:linear_ad}
is given in terms of SBP operators as
 \begin{equation}\label{eq:linear_ad:semidisc}
 \begin{split}
    \frac{\partial  \vec{u}}{\partial t}+a\mat{D}\vec{u}&=\mat{M}^{-1}\vec{\mathbb{S}} , \quad t>0,\\
    \vec{u}(0)&=\vec{u}_{in},
 \end{split}
 \end{equation}
where $\vec{u}=(u_0,\; u_1,\;, \cdots, u_N(t))^T$ are the coefficients of $u$ and similarly for $\vec{u}_{in}$. The coefficients are evaluated on the nodal values, i.e. the grid points, and are used to express the numerical solution \eqref{eq:solution_approx}.
Translating this into the FE framework, they correspond to the coefficients for the degrees of freedoms.
The symmetric positive definite matrix $\mat{M}$ approximates the usual $L^2$ scalar product. Together with condition $3.$ from Definition \ref{de:SBP},  we mimic integration by parts  discretely, i.e. 
\begin{equation}\label{eq:SBP_property}
 v(1)u(1)-u(0)v(0)=\int_0^1 u(x) v'(x) \diff x  +  \int_0^1 u'(x) v(x) \diff x \approx \vec{u} \mat{M}\mat{D}\vec{v}  +\vec{u} \mat{D}^T\mat{M} \vec{v}=\vec{u}  \mat{B}\vec{v} 
\end{equation}
In \eqref{eq:SBP_property}, we have  for smooth functions $u$ 
\begin{equation}\label{eq:approx_matrix}
 \mat{D}\vec{u}\approx   \frac{\partial}{\partial x} u \text{ and }
||\vec{u}||_{\mat{M}}^2:= \vec{u}^T\mat{M}\vec{u} \approx \int_{0}^{1} u^2(x)\diff x.
\end{equation}

Instead of having an extra equation on the boundary like in \eqref{eq:linear_ad}, the boundary condition 
is enforced weakly by the term 
$\vec{\mathbb{S}}=(\mathbb{S}_0,0,\cdots, \mathbb{S}_N)^T$ which is called the SAT. 
We demonstrate how it should be selected to guarantee stability for \eqref{eq:linear_ad:semidisc}.

\begin{definition}\label{def:energy_stable}
 The scheme \eqref{eq:linear_ad:semidisc} is called  strongly energy stable  if 
 \begin{equation}\label{es:energy_stable}
  ||\vec{u}(t)||_{\mat{M}}^2\leq K(t)\left(||\vec{u}_{in}||_{\mat{M}}^2+
  \max_{t_1\in [0,t]} \remi{|b(t_1)|}^2  \right)
 \end{equation}
 holds. 
 The term $K(t)$ is bounded for any finite $t$  and independent from $u_{in}$, $b$ and the mesh. 
\end{definition}

\begin{remark}
The Definition \ref{def:energy_stable} is formulated in terms of the initial value problem \eqref{eq:linear_ad} where only one 
boundary term is fixed. If an 
additional forcing function is considered at the right hand side of
\eqref{eq:linear_ad}, we include the maximum of this function
in \eqref{es:energy_stable} in the spirit of $b$,  for details see \cite{svard2014review}. 
\end{remark}

As established in \cite{carpenter1994time}, we can prove the following:
\begin{proposition}\label{prop:energy_stable}
Let $\mat{D}=\mat{M}^{-1}\mat{Q}$ be an SBP operator defined in   \ref{de:SBP} with $\mat{Q}$ fulfilling 
\begin{equation}\label{eq:Q_import}
 \mat{Q}+\mat{Q}^T=\mat{B}=\diag(-1,0, \cdots,0,1).
\end{equation}

Let $a^+=\max\{a,0\}$ and $a^-=\min\{a,0\}$, $b_0=b(0,t)$ and $b_N=b(1,t)$. If $\mathbb{S}_0=\tau_0 a^+ (u_0-b_0)$  and $\mathbb{S}_N=\tau a_N^- (u_N-b_N)$  with $  \tau_0, \tau_N < - \frac{1}{2}$,
then the scheme \eqref{eq:linear_ad:semidisc} is \remi{strongly} energy stable. 
\end{proposition}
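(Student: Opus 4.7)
The plan is to apply the classical discrete energy method: multiply the semidiscrete equation \eqref{eq:linear_ad:semidisc} on the left by $\vec{u}^T\mat{M}$, use the SBP property $\mat{Q}+\mat{Q}^T=\mat{B}$ to get rid of the interior term, substitute the prescribed SATs, and finally complete the square in the boundary unknowns so that the data term $b$ is absorbed cleanly. Integration in time then yields the bound \eqref{es:energy_stable}.

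More concretely, first I would take the inner product of \eqref{eq:linear_ad:semidisc} with $\vec{u}$ in the $\mat{M}$-norm, producing
\begin{equation*}
\vec{u}^T\mat{M}\frac{\partial \vec{u}}{\partial t}+a\,\vec{u}^T\mat{Q}\vec{u}=\vec{u}^T\vec{\mathbb{S}}.
\end{equation*}
Adding the transposed identity and using Definition~\ref{de:SBP}(3) gives
\begin{equation*}
\frac{d}{dt}\|\vec{u}\|_{\mat{M}}^2+a\,\vec{u}^T\mat{B}\vec{u}=2\vec{u}^T\vec{\mathbb{S}},\qquad \vec{u}^T\mat{B}\vec{u}=u_N^2-u_0^2,
\end{equation*}
which is the discrete analogue of the continuous energy identity, and is the only place where the SBP property enters.

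Next I would split $a=a^++a^-$, insert the SATs $\mathbb{S}_0=\tau_0 a^+(u_0-b_0)$, $\mathbb{S}_N=\tau_N a^-(u_N-b_N)$, and treat the two cases $a\geq 0$ and $a\leq 0$ separately (they are symmetric). For $a>0$ the identity reduces to
\begin{equation*}
\frac{d}{dt}\|\vec{u}\|_{\mat{M}}^2=-a\,u_N^2+a(1+2\tau_0)u_0^2-2a\tau_0 u_0 b_0.
\end{equation*}
Here $-a u_N^2\leq 0$ is harmless, and the boundary term at $x=0$ is a quadratic in $u_0$ with leading coefficient $a(1+2\tau_0)$. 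The crucial observation, and the only real obstacle, is that the hypothesis $\tau_0<-\tfrac12$ makes this leading coefficient strictly negative, so completing the square gives
\begin{equation*}
(1+2\tau_0)u_0^2-2\tau_0 u_0 b_0=(1+2\tau_0)\Bigl(u_0-\tfrac{\tau_0 b_0}{1+2\tau_0}\Bigr)^2-\tfrac{\tau_0^2}{1+2\tau_0}\,b_0^2\leq -\tfrac{\tau_0^2}{1+2\tau_0}\,b_0^2,
\end{equation*}
and the right-hand side is a finite, nonnegative multiple of $b_0^2$ because $1+2\tau_0<0$. The symmetric computation for $a<0$ uses $\tau_N<-\tfrac12$ in exactly the same way.

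Finally I would integrate in time the resulting inequality $\tfrac{d}{dt}\|\vec{u}\|_{\mat{M}}^2\leq C(a,\tau_0,\tau_N)\,|b(t)|^2$, bound the integral by $t\,\max_{t_1\in[0,t]}|b(t_1)|^2$, and add $\|\vec{u}(0)\|_{\mat{M}}^2=\|\vec{u}_{in}\|_{\mat{M}}^2$ to get \eqref{es:energy_stable} with $K(t)=\max\{1,Ct\}$, which is independent of the initial data, the boundary data, and the mesh, as required by Definition~\ref{def:energy_stable}. The only subtlety of the whole argument is the sign analysis that forces $\tau_0,\tau_N<-\tfrac12$; everything else is a direct transcription of the continuous energy estimate through the SBP property.
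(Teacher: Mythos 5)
Your proposal is correct and follows essentially the same route as the paper's own proof: multiply by $\vec{u}^T\mat{M}$, add the transpose, invoke $\mat{Q}+\mat{Q}^T=\mat{B}$ to reduce everything to boundary terms, insert the SATs, and complete the square using $\tau_0,\tau_N<-\tfrac12$. You go slightly further by explicitly integrating in time to recover the bound of Definition~\ref{def:energy_stable}, a step the paper leaves implicit after establishing the differential inequality.
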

\begin{proof}
 Multiplying \eqref{eq:linear_ad:semidisc} with $\vec{u}^T\mat{M}$ yields 
 \begin{equation}\label{eq:SBp}
  \vec{u}^T\mat{M}  \frac{\partial}{\partial t} \vec{u}+a\vec{u}^T\mat{M}\mat{D}\vec{u}=\vec{u}^T\vec{\mathbb{S}}. 
 \end{equation}
 Transposing \eqref{eq:SBp} and adding both equations together leads to 
 \begin{equation*}
\frac{\rm{d}}{\rm d t } ||\vec{u}||_{\mat{M}}^2= \vec{u}^T\mat{M}   \frac{\partial}{\partial t}  \vec{u} +   \frac{\partial}{\partial t}\vec{u}^T \mat{M}\vec{u}=-a \vec{u}^T (\mat{Q}+\mat{Q}^T) \vec{u}+2\vec{u}^T\vec{\mathbb{S}}.
 \end{equation*}
Further, we obtain from \eqref{eq:Q_import}
 \begin{equation*}
 \frac{\rm{d}}{\rm d t }  ||\vec{u}||_{\mat{M}}^2= \bigg (a u_0^2 +2a^+\tau u_0(u_0-b_0)\bigg ) -\bigg (au_{N}^2 -2a^-\tau u_N(u_N-b_N)\bigg ).%\leq  a u_0^2 +2a\tau u_0(u_0-b_0).
 \end{equation*}
% Looking at the cases $a>0$ and $a<0$, 
 If  $\tau_0, \tau_N<- \frac{1}{2}$, we find
 \begin{equation*}
  \frac{\rm{d}}{\rm d t }  ||\vec{u}||_{\mat{M}}^2\leq -\frac{a^+\tau^2}{(1+2\tau)} b_0^2+\frac{a^-\tau^2}{(1+2\tau)} b_N^2. %=0 only when zero boundary conditions!
 \end{equation*}
\end{proof}
This shows that the boundary operator $\mathbb{S}$ can be 
chosen in such way that it guarantees stability for the SBP-SAT approximation of \eqref{eq:linear_ad}.  
Next, we will apply this technique in the Galerkin framework.

\subsection{SATs in the Galerkin-Framework}\label{subsec:SATs:FE}
Instead of working with SBP-FD framework we consider now   a Galerkin approach for the approximation
of \eqref{eq:linear_ad}. 
In \cite{carpenter2014entropy, gassner2013skew},  it is shown that the specific DG schemes 
satisfies a discrete summation-by-parts (SBP) property and can be interpreted as SBP-SAT schemes
with a diagonal mass matrix. In this context, one speaks about the \textbf{discontinuous Galerkin 
spectral element method} (DGSEM). Here, we consider nodal continuous Galerkin methods and 
focus on stability conditions in this context.
%it is presented that also the
%discontinuous Galerkin spectral element method (DGSEM)
%satisfies a discrete summation-by-parts (SBP) property and can be interpreted as
%an SBP-SAT scheme with diagonal mass matrix. 
As we described already in Section \ref{eq:Finite_element}, the
difference between the continuous and discontinuous Galerkin approach is the 
solution space \eqref{eq:solution_space} and the  structure of the mass matrix \eqref{eq:finite_semidiscrete}
which is not block diagonal in CG. 
\revt{However, in the following we consider only Galerkin approaches which fulfill the SBP property meaning that 
a discrete Gauss theorem is valid. }
The approach with SAT 
terms can still be used to ensure stability 
also in case of CG but one has to be precise, as we will explain
in the following.
Let us step back to the proof of Proposition \ref{prop:energy_stable} and have a
closer look. Essential in the proof is  condition \eqref{eq:Q_import}. 
Let us focus on this condition for a \remi{Galerkin} discretisation of \eqref{eq:linear_ad} as described also in \cite{nordstrom2006conservative}.
We approximate \rev{equation \eqref{eq:linear_ad}} now with  $u^h(x, t)=\sum\limits_{j=0}^N u^h_j(t)\varphi_j(x)$ where $\varphi_j$ are basis functions and $u^h_j$ are the coefficients. 
First, we consider the problem without including the boundary conditions.  
Let us assume that $\varphi_j$ are Lagrange polynomials where the degrees of freedoms are associated to points in the interval.
Introducing the scalar product
$$\est{u,v}=\int_I u(x)v(x)\; \diff x ,$$  
let us consider the variational formulation of the advection equation \eqref{eq:linear_ad}  with test function $\varphi_i$. We insert   the approximation and get
\begin{equation*}
\begin{split}
\est{   \frac{\partial}{\partial t} u^h(t,x), \varphi_i(x) }+ \est{a  \frac{\partial}{\partial x}u^h(t,x), \varphi_i (x)}&{=} 0, \quad \forall i=0, \cdots, N,\end{split}\end{equation*}
i.e.
\begin{equation*}\begin{split}
 \int_I \sum_{j=0}^N (  \frac{\partial}{\partial t} u^h_j(t)) \varphi_j(x)\varphi_i(x) \diff x+a \int_I \sum_{j=0}^Nu^h_j(t)(  \frac{\partial}{\partial x}\varphi_j(x) ) \varphi_i(x) \diff x&=0.
\end{split}\end{equation*}
Finally, we get 
\begin{equation}\label{eq:approx}\begin{split}
 \sum_{j=0}^{N} M_{i,j}(  \frac{\partial}{\partial t}u^h_j(t) ) + a\sum_{j=0}^{N} Q_{i,j} u_j^h(t)&=0\\
\end{split}
\end{equation}
with
\begin{equation}\label{eq:definition_norm_matrix}
M_{i,j}= \int_I  \varphi_j(x)\varphi_i(x) \diff x \quad \text{ and } \quad Q_{i,j}= \int_I (  \frac{\partial}{\partial x} \varphi_j(x)) \varphi_i(x) \diff x.
\end{equation}
In matrix formulation \eqref{eq:approx} can be written 
\begin{equation*}
 \mat{M}\frac{\partial}{\partial t} \vec{u} +a\mat{Q}\vec{u}=0
\end{equation*}
as  described in \cite{nordstrom2006conservative}.
Let us check \eqref{eq:Q_import}.  We 
consider
\begin{equation}\label{eq:Q_inte}
\begin{split}
Q_{i,j}+Q_{i,j}^T&= \int_I (  \frac{\partial}{\partial x} \varphi_j(x)) \varphi_i(x) \diff x +\int_I (  \frac{\partial}{\partial x} \varphi_i(x)) \varphi_j(x) \diff x
= \int_I  \frac{\partial}{\partial x} \left(  \varphi_j(x) \varphi_i(x) \right) \diff x\\
&= \varphi_i(x)\varphi_j(x)|_{0}^1=\varphi_i(1)\varphi_j(1)-\varphi_i(0)\varphi_j(0) \quad \forall i,j =0, \cdots, N.
\end{split}
\end{equation}
%If on both sides  (zero and one) degrees of freedom are set, 
If the boundaries are included in the set of degrees of freedom,
then we obtain  
\begin{equation*}
\varphi_i(1)\varphi_j(1)-\varphi_i(0)\varphi_j(0)=\begin{cases}
									1 &\text{ for } i=j=N,\\
									-1 &\text{ for } i=j=0,\\
									0 &\text { elsewhere}.
									\end{cases}
\end{equation*}
Up to this point exact integrals are considered but the same steps are valid if  a quadrature rule is applied
such that   \eqref{eq:Q_import} is satisfied and \eqref{eq:Q_inte} is mimicked on the discrete level. 
\revt{This is ensured if the SBP property is fulfilled. Note that in this paper we only consider Galerkin schemes which guarantee this property. }
However, if we  include a weak boundary condition similar to \eqref{eq:linear_ad:semidisc},
we obtain the semidiscrete scheme 
\begin{equation}\label{eq:approx_SAT}\begin{split}
 \sum_{j=0}^{N} M_{i,j}(  \frac{\partial}{\partial t}u^h_j(t) ) + a\sum_{j=0}^{N} Q_{i,j} u_j^h(t)&=\mathbb{S}\\
\end{split}
\end{equation}
with the SAT term given by 
\begin{equation}\label{eq:SAT_GAlerking}
 \mathbb{S}:=\tau a^+ (u_0-b_0)\delta_{x=0}+\tau a^- (u_N-b_N)\delta_{x=x_N=1}.
 \end{equation}
By following the steps from the proof of Proposition  \ref{prop:energy_stable},
we can prove:

\begin{proposition}\label{prop:galerkin_energy}
If the Galerkin method  \eqref{eq:approx_SAT}  is applied to solve \eqref{eq:linear_ad}
with $\mathbb{S}$ given by \eqref{eq:SAT_GAlerking}  and  $\tau \remi{<} - \frac{1}{2}$,%the described Galerkin 
%scheme \eqref{eq:approx_SAT}
it  is \remi{strongly} energy stable. 
\end{proposition}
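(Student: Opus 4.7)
The plan is to mirror the energy argument of Proposition \ref{prop:energy_stable} in the Galerkin setting, using the identity \eqref{eq:Q_inte} as the substitute for the SBP condition \eqref{eq:Q_import}. First I would rewrite the semidiscrete scheme \eqref{eq:approx_SAT} in matrix form as
\begin{equation*}
  \mat{M}\frac{\partial}{\partial t}\vec{u} + a\mat{Q}\vec{u} = \vec{\mathbb{S}},
\end{equation*}
where $\vec{\mathbb{S}}$ is the vector of nodal contributions coming from the weak enforcement of the boundary condition, namely $\mathbb{S}_0 = \tau a^{+}(u_0-b_0)$, $\mathbb{S}_N = \tau a^{-}(u_N-b_N)$, and zero otherwise, consistent with \eqref{eq:SAT_GAlerking} after testing against the basis functions $\varphi_0$ and $\varphi_N$ (which are Lagrange nodal functions at the endpoints, so only the boundary nodes pick up the delta contribution).

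Next, I would multiply this system by $\vec{u}^T$ on the left, transpose, and add the two resulting scalar equations. Using that $\mat{M}$ is symmetric positive definite (so $\vec{u}^T\mat{M}\partial_t\vec{u} + (\partial_t\vec{u})^T\mat{M}\vec{u} = \tfrac{\rm d}{\rm d t}\|\vec{u}\|_{\mat M}^2$), this gives
\begin{equation*}
  \frac{\rm d}{\rm d t}\|\vec{u}\|_{\mat{M}}^2 = -a\,\vec{u}^T(\mat{Q}+\mat{Q}^T)\vec{u} + 2\vec{u}^T\vec{\mathbb{S}}.
\end{equation*}
The key step is then to invoke the Galerkin SBP identity \eqref{eq:Q_inte}, which shows that $\mat{Q}+\mat{Q}^T = \diag(-1,0,\dots,0,1)$ whenever the endpoints belong to the set of DoFs and the discrete Gauss theorem (i.e.\ the SBP property) holds, as assumed in this section. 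This converts the volume term into the clean boundary contribution $a(u_N^2 - u_0^2)$.

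Substituting, splitting $a = a^+ + a^-$, and inserting the explicit form of $\vec{\mathbb{S}}$ yields exactly the same algebraic expression as in the proof of Proposition \ref{prop:energy_stable}:
\begin{equation*}
  \frac{\rm d}{\rm d t}\|\vec{u}\|_{\mat{M}}^2 = \bigl(a u_0^2 + 2 a^{+}\tau u_0(u_0-b_0)\bigr) - \bigl(a u_N^2 - 2 a^{-}\tau u_N(u_N-b_N)\bigr).
\end{equation*}
From here, completing the square in $u_0$ and $u_N$ separately and using $\tau < -\tfrac{1}{2}$ (so that $1+2\tau<0$) absorbs the quadratic terms in $u_0,u_N$ with a negative sign, leaving a bound purely in terms of $b_0^2$ and $b_N^2$. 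Integrating in time then delivers \eqref{es:energy_stable} with a suitable $K(t)$, proving strong energy stability.

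The main obstacle I anticipate is not analytical but notational: one must verify carefully that the Galerkin testing of the Dirac contributions in \eqref{eq:SAT_GAlerking} really produces only the two nodal entries $\mathbb{S}_0$ and $\mathbb{S}_N$ with the expected coefficients (this relies on the Lagrange/nodal nature of $\varphi_j$), and that the SBP identity \eqref{eq:Q_inte} holds at the discrete level after the chosen quadrature, as emphasized in the text just before \eqref{eq:approx_SAT}. Once these two points are in place, the rest is a direct transcription of the SBP-FD proof and no additional internal dissipation is needed.
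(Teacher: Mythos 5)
Your proposal is correct and follows essentially the same route as the paper's proof: test the SAT-augmented weak form with $u^h$ itself, use the Galerkin SBP identity \eqref{eq:Q_inte} to reduce $\mat{Q}+\mat{Q}^T$ to the boundary matrix $\diag(-1,0,\dots,0,1)$, and then repeat the completing-the-square argument of Proposition \ref{prop:energy_stable} with $\tau<-\tfrac12$. The only cosmetic difference is that the paper restricts to $a>0$ for brevity while you carry both boundary terms, and your closing caveats about the nodal testing of the Dirac terms and the discrete validity of \eqref{eq:Q_inte} are exactly the assumptions the paper makes explicit in the surrounding text.
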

\begin{proof}
\rev{The weak formulation of the problem reads:
\begin{equation*}
\begin{split}
\est{\frac{\partial}{\partial t} u^h(t,x), \varphi_i(x) }+ \est{a  \frac{\partial}{\partial x}u^h(t,x), \varphi_i (x)}&{=} 
\tau a^+ (u^h(0,t)-b_0(t))\varphi_i(0)
+\tau a^- (u^h(1,t)-b_N)\varphi_i(1), \end{split}\end{equation*}
for all $i=0,\cdots, N$, where  for simplicity, we consider the case $a>0$.}
The SAT techniques adds a penalty term into the approximation \eqref{eq:approx_SAT} on the right side. 
We focus now on the energy. Therefore, we multiply also with $u^h$ instead of $\varphi_i$ and rearrange the terms.
 We obtain for the semi-discretization  \eqref{eq:approx_SAT}:
\begin{equation*}
  \sum_{i,j,=0}^{N} M_{i,j}\left(  \frac{\partial}{\partial t}u^h_j(t) \right) u^h_i(t) + a\sum_{i,j=0}^{N} Q_{i,j} u_j^h(t)u_i^h(t) = a\tau u^h_0(t)(u^h_0(t)-b_0(t))
\end{equation*}
where we used the fact that $u^h(t,0)=\sum_{i=0}^N u_i^h(t)\varphi_i(0)=u_0^h(t) $ is valid. 
By following  the steps of the proof of Proposition \ref{prop:energy_stable} and using 
\eqref{eq:Q_inte} we get the final result. 
\end{proof}
In the derivation above, we restricted ourselves to
one-dimensional problems using Lagrange interpolations. 
Nevertheless, this shows that 
a continuous Galerkin method is stable if 
the boundary condition is enforced by a proper penalty term. 
For the general FE semi-discretization of
\eqref{eq:finite_semidiscrete},  the procedure is similar and straightforward.
Without loss of generality, it is enough to consider homogeneous boundary conditions and 
for a general   linear problem (scalar or systems) the formulation \eqref{eq:finite_semidiscrete}
%\footnote{Remember that the flux is linear with Jacobian \rev{$A$ where $\mat{A}$ represents its discretisation.} }}
can be written with penalty terms as 
\begin{equation}\label{eq:FD_advection}
\mat{M} \frac{\partial} {\partial t}\vec{\bf U}^h+\mat{Q_1}\ \vec{\bU}^h= \rev{\Pi} ( \vec{\bU}^h )
\end{equation}
where $ \rev{\Pi} $ is the boundary operator 
which includes the boundary conditions. This operator can be expressed  in the discretization by a matrix vector multiplication. 
With a  slight of abuse of notation,  we denote this boundary matrix with $\mat{\Pi}$ and it  is usually sparse. Further, %$\mat{A} \vec{\bU}^h$ represents the flux and 
$\mat{Q_1}$  represent the spatial operator and $\mat{Q_1} +\mat{Q_1}^T $ has only contributions on the boundaries. 
Then, we can prove.
 
\begin{theorem} \label {general}
Apply  the general FE semidiscretisation \eqref{eq:FD_advection}  together with the SAT approach 
to a linear equation
and let the mass matrix $\mat{M}$  of   \eqref{eq:FD_advection} be symmetric and positive definite. 
If the boundary operator $\mat{\Pi}$ together with the discretization $\mat{Q_1}$ can be chosen such that 
\begin{equation}\label{test}
(\mat{\Pi}-\mat{Q_1})+ \left(\mat{\Pi}-\mat{Q_1} \right)^T
\end{equation}
\rev{is negative semi-definite, then the scheme is energy stable. }
\end{theorem}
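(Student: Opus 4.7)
The plan is to imitate the energy argument used in the proof of Proposition \ref{prop:energy_stable} and Proposition \ref{prop:galerkin_energy}, but carried out abstractly at the matrix level so that it covers the general FE setting (one or several space dimensions, scalar or system). Since the theorem concerns a linear problem and asks only for energy stability, I may take homogeneous boundary data without loss of generality (the inhomogeneous case just adds the standard Young-type absorption of the data, already illustrated in Proposition \ref{prop:energy_stable}), so I work with $\mat{M}\,\partial_t \vec{\bU}^h + \mat{Q_1}\vec{\bU}^h = \mat{\Pi}\,\vec{\bU}^h$.

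First I would define the discrete energy by $\|\vec{\bU}^h\|_{\mat M}^{2} := (\vec{\bU}^h)^{T}\mat{M}\vec{\bU}^h$, which is a genuine norm because $\mat M$ is symmetric positive definite (this is where that hypothesis is used). Then I multiply the semidiscrete equation from the left by $(\vec{\bU}^h)^{T}$, take the transpose of the resulting scalar identity, and add the two. Using $\mat M = \mat M^{T}$ the two time-derivative contributions combine to $\tfrac{d}{dt}\|\vec{\bU}^h\|_{\mat M}^{2}$, while the spatial and boundary contributions combine to $(\vec{\bU}^h)^{T}\bigl[(\mat\Pi-\mat{Q_1}) + (\mat\Pi-\mat{Q_1})^{T}\bigr]\vec{\bU}^h$. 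This gives
\begin{equation*}
\frac{d}{dt}\|\vec{\bU}^h\|_{\mat M}^{2} \;=\; (\vec{\bU}^h)^{T}\Bigl[(\mat\Pi-\mat{Q_1}) + (\mat\Pi-\mat{Q_1})^{T}\Bigr]\vec{\bU}^h .
\end{equation*}
By the hypothesis \eqref{test} the right-hand side is $\le 0$, so integrating in time yields $\|\vec{\bU}^h(t)\|_{\mat M}^{2}\le \|\vec{\bU}^h(0)\|_{\mat M}^{2}$, which is exactly the energy stability in the sense of Definition \ref{def:energy_stable} (with inhomogeneous data $b$, one simply splits the mixed term $(\vec{\bU}^h)^T \mat\Pi\,\vec b$ with Cauchy--Schwarz/Young and absorbs half into the negative semi-definite form, exactly as was done to obtain the $\tau^2/(1+2\tau)$ factor in Proposition \ref{prop:energy_stable}).

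The step I expect to require the most care is not the algebra but the bookkeeping for the operator $\mat{\Pi}-\mat{Q_1}$: I have to make sure that in the general FE case (with a non–block-diagonal mass matrix and possibly unstructured simplicial meshes) the symmetric part $\mat{Q_1}+\mat{Q_1}^{T}$ really reduces to pure boundary contributions. This is precisely the SBP property stressed in Section \ref{subsec:SATs:FE} via the computation \eqref{eq:Q_inte}, and it is the hypothesis implicitly built into the statement of the theorem (``$\mat{Q_1}+\mat{Q_1}^{T}$ has only contributions on the boundaries''). Once that discrete Gauss identity is in hand, the interior of $\mat\Pi-\mat{Q_1}$ cancels, the symmetric part lives only on boundary degrees of freedom, and the designer's freedom in choosing $\mat\Pi$ is exactly what is needed to render \eqref{test} negative semi-definite — the multidimensional analogue of picking $\tau<-\tfrac12$ in Propositions \ref{prop:energy_stable} and \ref{prop:galerkin_energy}. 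After that observation the proof reduces to the two-line energy computation above.
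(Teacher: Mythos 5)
Your proposal is correct and follows essentially the same route as the paper's own proof: multiply the semidiscrete system by $(\vec{\bU}^h)^{T}$, add the transposed identity using $\mat{M}=\mat{M}^{T}$, and conclude $\tfrac{d}{dt}\|\vec{\bU}^h\|_{\mat{M}}^{2}\leq 0$ from the negative semi-definiteness of \eqref{test}. The additional remarks on homogeneous data and on the SBP/boundary-locality of $\mat{Q_1}+\mat{Q_1}^{T}$ simply make explicit assumptions the paper already builds into the setup of \eqref{eq:FD_advection}.
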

\begin{proof}
We use the energy approach and multiply our discretization with $\bU^h$ instead of $\varphi_i$
and add the transposed equation using $\mat{M}^T=\mat{M}$. 
We obtain 
\begin{equation*}
\dfrac{d} {d t} || \vec{\bU}^h||^2_{\mat{M}} =\vec{\bU}^{h,T} \left(
(\mat{\Pi}-\mat{Q_1})+ \left(\mat{\Pi}-\mat{Q_1}\right)^T \right)
\vec{\bU}^{h} %=\vec{W}^{h,T} \mat{\Delta} \vec{W}^{h}
 \leq0.
\end{equation*}
%\rev{where  $\mat{\Delta}$ is a diagonal matrix containing the eigenvalues on its diagonal and $\vec{W}^h=\mat{X}\vec{\bU}^h$
%with $\mat{X}$ containing the eigenvectors.}
\end{proof}

\begin{remark}\label{re:exactness}
This theorem yields  directly  conditions \remi{when a FE method is stable, or not.}
%for  a FE based stable scheme. 
%In the construction and implementation of the scheme one can check this condition.
If the matrix \eqref{test} has positive eigenvalues $\{\lambda_i\} $, stabilization terms have to be added. 
\textbf{However, no internal stabilization terms are necessary when  \eqref{test}  is negative semi-definite.}
Therefore,  a number of requirements are needed. 
The \revs{ distribution of the residuals attached to the degrees of freedom should be  done  in an "intelligent" way e.g.  if we consider triangle elements and polynomial order $p=1$, we set the DoFs in every edge and not all of them on one face.}
Further,  the chosen  quadrature rule  in the  numerical integration has to be the same as in the differential operators. This means that  the applied quadrature rule to  calculate  the mass matrix should be the same as  
the used one to determine  the differential operators\revt{,  such that SBP property is fulfilled meaning that a discrete Gauss Theorem is valid.}
In the numerical test, we will present an example of what happens if the chosen quadrature rules 
disregard this.
Furthermore, in case of a non-conservative formulation of the hyperbolic problem or in case of variable 
coefficients  
a skew-symmetric/split  formulation should be applied in the way described in \cite{nordstrom2006conservative, ranocha2017extended, offner2019error}.
In the one-dimensional setting, we obtain in the continuous case
\begin{equation*}
\partial_x (au) =\alpha \partial_x(au)  + (1-\alpha)  \left( u (\partial_x a) + a(\partial_x u)\right) 
\end{equation*}
with $\alpha=0.5$ and the implementation has to mimic this behavior.\\
If the implementation of the continuous Galerkin method is done in such way that 
the matrix  \eqref{test}  is negative semi-definite, then  the method is stable only through our  boundary procedure.
In our opinion,  this is \textbf{contrary} to common belief about continuous Galerkin methods for hyperbolic problems.
The only stabilizing factor needed is a proper implementation of boundary conditions. 
For the linear scalar case, the proof is given in Proposition 
\ref{prop:galerkin_energy}.
In the following, we will extend this theory to more general 
cases.
\end{remark}

\begin{remark}[Weak Boundary  Conditions in Galerkin Methods for Hyperbolic Problems]\label{re:Further_similar}
The weak formulation of the boundary condition is not done for the first time to analyze 
stability properties in continuous Galerkin  methods. As already mentioned in the introduction, in 
\cite{layton1983stable, layton1983stable1, gunzburger1977stability} the authors have 
included the procedure at the boundary in their stability analysis where the main idea is to 
switch the norm of the trial space to prove stability.  \\
Further, in  \cite{bazilevs2007weak} the authors have compared weak and strong implementation of the boundary conditions 
for incompressible Navier Stokes when the boundary condition is discontinuous
and $C^0$ approximations are used. Here, non-physical oscillations are arising and  by switching to the weak implementation, the authors have been able solve this issue.
However, as a baseline schemes they have always supposed a stabilized Galerkin methods like SUPG 
in their theoretical considerations  and applied it  in their numerical examples.
They have not imposed the weak boundary condition to stabilize their baseline scheme, but to cancel out these 
oscillations. In  Nitsche's method \cite{nitsche1971variationsprinzip} for elliptic and parabolic problems, 
the boundary conditions are also imposed weakly. Here, the theoretical analysis is based on the bilinear 
from. However, further  extensions of this method can be found and also a comparison 
to several DG methods. For an introduction and some historical remark, we strongly recommend 
 \cite{cockburn2012discontinuous} for more details.
 Finally, in DG methods it is common to impose boundary conditions weakly in hyperbolic problems  and a detailed analysis 
 for Friedrichs' system \cite{friedrichs1958symmetric} can be found in \cite{ern2006discontinuous} which is oriented more on the variational formulation. \\
 Finally, we want to point out again that the purpose of this paper is to demonstrate that no further internal dissipation is needed 
 if the boundary conditions are implemented correctly. In addition, our analysis holds also if we apply unstructured 
 grids as demonstrated in the numerics section below. 
\end{remark}

\section{Estimation of the SAT-Boundary Operator}\label{sec:boundary_operator}

As described before, a proper implementation of the boundary condition  
is essential for stability. 
Here, we give a  recipe for  how these SAT boundary operators can be chosen to
get a stable CG scheme for different types of problems.
First, we consider a scalar equation in 2D and 
transfer the eigenvalue analysis for the spatial operator from 
the continuous to the discrete setting. 
%Then, 
%we extend our investigation to 1D systems.
%Using again the continuous setting, we develop estimations 
%for $\Pi$ and transfer the results to the finite element framework.
%Finally, we extend the investigation to 
%the system case in two dimensions which will also be used 
%in the numerical section later. 
Then, we extend our investigation to two dimensional systems.
Using again the continuous setting, we develop estimates 
for $\Pi$ and transfer the results to the finite element framework.
We apply them later in the numerical section.

\subsection{Eigenvalue Analysis}\label{section_eigenvalue}
We derive  conditions on the boundary 
operators and perform an  eigenvalue analysis to get \remi{an energy estimate} in the continuous setting.
Next, the results are transformed to the 
discrete framework to guarantee stability of the discrete scheme. 

\subsubsection{The scalar case}
\subsubsection*{Continuous Setting}

Consider the initial boundary value problem 
\begin{equation}\label{eq:continous_eig}
\begin{aligned}
\frac{\partial} {\partial t}u+a \frac{\partial} {\partial x} u+b \frac{\partial} {\partial y}u &=0 \quad &x\in \Omega, \quad &t>0, \\
Bu&=g \quad &x\in \partial \Omega, \quad &t>0,\\
u&=f  \quad &x\in \Omega,\quad &t=0
\end{aligned}
\end{equation}
 in the spatial domain $\Omega \subset \R^2$.
Further, $a,b \in  \R$, the function $f$ describes the initial condition, $B$ represents the boundary operator and 
the function $g$ the boundary data.
Without loss of generality,  it is enough to  consider  
homogeneous boundary conditions and we consider the spatial operator%and define our eigenvalue problem as follows:
\begin{equation}\label{eq:spatial}
\begin{aligned}
Du&:= \left(a\frac{\partial}{\partial x} +b\frac{\partial}{\partial y}\right) u, \\
%Bu&:=0.
\end{aligned}
\end{equation}
considered in the subspace of functions for which $Bu=0$.
This operator will be dissipative  if $\est{u,Du}>0$. Using the 
 Gauss-Green theorem, we
obtain 
\begin{equation}
\est{u,Du}=\int_\Omega uDu \; \diff{\Omega}=
\int_{\partial \Omega}\frac{a}{2}u^2 \diff{y} -\frac{b}{2}u^2\diff{x}   
= \remi{\frac12} \int_{\partial \Omega} \underbrace{ (a,b) \cdot \n}_{:=a_n} u^2 \diff{s}.% =\lambda ||u||^2.
\end{equation}
The operator is hence dissipative if %Only positive eigenvalues  $\lambda >0$ are obtained if
$
\int_{\partial \Omega} a_n u^2\diff{s}>0.
$ %  is fulfilled. 
%Looking at this question amounts to looking at the spectral properties of the operator $D$.
The question rises: How do we guarantee this condition? This is the role of the boundary conditions, i.e. when $a_n\leq 0$, we need to impose 
 $u=0$.  For outflow, i.e. $\partial \Omega_{out} $ 
we have $a_n>0$ and using this information, 
we directly obtain 
\begin{equation}\label{eq:eigenvalue_2}
\est{u,Du} = \remi{\frac12}\int_{\partial \Omega_{out}} a_n u^2 \diff{s},>0.%= \lambda || u||^2 
\end{equation} 
%with $\lambda>0$. 
\remi{and we have an energy estimate.}
We do not discuss \revt{well posedness}, but we
recommend  \cite{nordstrom2017roadmap, nordstrom2019energy} for details regarding that.  
Now, we transfer our analysis to the discrete framework 
and
imitate  this behavior 
discretely. 

\subsubsection*{Discrete Setting}

We have to approximate the spatial operator $D$ and the boundary condition (B.C), i.e. 
$Du+B.C$ by an operator of the form $\mat{M}^{-1}( \mat{Q}-\mat{\Pi}) \vec{u}$
where we apply  SBP operators as defined in Definition \ref{de:SBP}.
The term $\mat{M}^{-1}\mat{Q}\vec{u}$ approximates $Du$ and $\mat{\Pi} \vec{u}$ is used to describe $Bu$ 
weakly. Here, the projection operator $\mat{\Pi}$ works only at the boundary points. % (Dofs). 
Note that we must have a $\mat{Q}$ such that $\mat{Q}+\mat{Q}^T$ only contain boundary terms.
Looking at the dissipative nature of $\mat{M}^{-1}\mat{Q}\vec{u}$ amounts to study its spectrum.
The related eigenvalue problem is 
\begin{equation}\label{eq:noDis}
\mat{M}^{-1}(\mat{Q}-\mat{\Pi}) \vec{\tu} =\lambda \vec{\tu}
\end{equation}
We denote by $\vec{\tu}^{*,T}$,  the adjoint of $\tu$ and 
multiply \eqref{eq:noDis} with $\vec{\tu}^{*,T}\mat{M}$  to obtain 
\begin{equation}\label{eq:discrete_setting}
\vec{\tu}^{*,T}(\mat{Q} - \mat{\Pi}) \vec{\tu}= \lambda \vec{\tu}^{*,T}\mat{M}\vec{\tu}= \lambda||\vec{\tu}||_{\mat{M}}^2. 
\end{equation}
We transpose \eqref{eq:discrete_setting}
and add both equations together. This results in 
\begin{equation}\label{eq:discrete_setting_transpose}
\underbrace{\vec{\tu}^{*,T}\left((\mat{Q}+\mat{Q}^T)-(\mat{\Pi}+\mat{\Pi}^T) \right)  \vec{\tu}}_{:=BT}= 2\operatorname{Re}(\lambda)||\vec{\tu}||_{\mat{M}}^2. 
\end{equation}

The boundary terms (BT) correspond to
$\int_{\partial \Omega_{out}} a_n u^2 \diff{s}$ with a properly chosen $\mat{\Pi}$.
Hence, the matrix 
\begin{equation}\label{eq:matrix_eigen}
(\mat{Q}- \mat{\Pi})+ \left( \mat{Q})-\mat{\Pi} \right)^T
\end{equation}
is positive semi-definite, i.e. the eigenvalues 
for the spatial operator  have a strictly positive real parts only.
Note that condition \eqref{eq:matrix_eigen} and \eqref{test}  are the same.
% This means nothing else that everything is in order and consistent with the theory.
Next, we estimate the boundary operators for a linear system such that the conditions
 in  Theorem \ref{general}
are fulfilled and the pure CG scheme is stable. We start with the continuous energy analysis and
derive the estimate  above. 
Afterwards, we translate the result to the discrete FE framework as done for the scalar one-dimensional case,
but before, we give the following remark:
\begin{remark}[Periodic Boundary Conditions]
As already described in the Introduction \ref{se:Intro}, periodic boundary conditions for hyperbolic problems 
together with a pure Galerkin scheme yields stability issues (von Neumann instability). 
If we consider a Galerkin scheme with the described operators where the used quadrature rule in the numerical integration is the same as the one in the differential operator and periodic boundary conditions in 
a hyperbolic problem, we have the eigenvalues on the imaginary axis, cf. \cite{nordstrom2006conservative}. Therefore, explicit time-integration schemes of order one or two like Euler method or SSPRK22  lead to instability since they 
do not include parts of the imaginary axis in their stability regions. In such a case,  we have to add stabilization terms (diffusion) to the equation.
Further, we want to point out that even with a stable discretization for a linear hyperbolic problem,
 an unbounded error growth is observed if periodic boundary conditions are imposed \cite{nordstrom2007error}. 
\end{remark}

\subsubsection{Systems of Equations} \label{subsec:extension}
%\todo{Checking...}

Next, we will extend our investigation to the general hyperbolic system 
\begin{equation}\label{eq:2_energy}
\begin{aligned}
 \dpar{U}{t}+A\dpar{U}{x}+B\dpar{U}{y}&=0,\quad&&  (x,y)\in \Omega, t>0\\
 L_\bn U&=G_\bn &&(x,y)\in\partial  \Omega, t>0
\end{aligned}
\end{equation}
where $A,B\in \R^{m\times m}$ are the Jacobian matrices of the system, the matrix $L_\bn\in \R^{q\times m}$ and the 
vector $G_\bn\in \R^q$ are known, $\bn$ is the local outward unit vector, $q$ is the number of boundary conditions to satisfy. 
We assume  that $A,B$ are constant and that the system \eqref{eq:2_energy} is symmetrizable.
There  exists a symmetric and  positive definite  matrix $P$ such that for any vector $\bn=(n_x,n_y)^T$ the matrix 
\begin{equation*}
C_\bn=A_\bn P
\end{equation*}
is symmetric with $A_\bn=An_x+Bn_y$. For arbitrary $n_x$, $n_y$ this implies that 
both $AP$ and $BP$ are symmetric.

%Using the matrix $P$, one can introduce new variables $V=P^{-1/2}U$. The original variable can be expressed as
%$U=P^{1/2}V$ and the original system \eqref{eq:2_energy} will become 
%\begin{equation}\label{eq:3:2}
% P^{1/2}\dpar{V}{t}+AP^{1/2}\dpar{V}{x}+BP^{1/2}\dpar{V}{y}=0.
%\end{equation}
%Multiplying \eqref{eq:3:2} form the left  by $P^{-1/2}$ we obtain the system 
%\begin{equation}\label{eq:4}
%\dpar{V}{t}+P^{-1/2}AP^{1/2}\dpar{V}{x}+P^{-1/2}BP^{1/2}\dpar{V}{y}=0,
%\end{equation}
%which is symmetric since $P^{-1/2}AP^{1/2}n_x+P^{-1/2}BP^{1/2}n_y=P^{-1/2}C_\bn P^{-1/2}$.}
%
%The system \eqref{eq:2_energy} admits an energy: if we multiply \eqref{eq:2_energy} by $V^T$, we first get
%\begin{equation*}
%\begin{split}
%\int_\Omega V^T\dpar{U}{t}\; \diff \Omega&=-\int_{\Omega} V^T\bigg ( A\dpar{U}{x}+B\dpar{U}{y} \bigg )\; \diff \Omega=-\int_\Omega V^T \bigg ( AP^{1/2}\dpar{V}{x}+BP^{1/2}\dpar{V}{y}\bigg ) \; \diff \gamma\\
%&=-\frac{1}{2}\int_{\partial \Omega} V^T C_\bn V \;\diff  \gamma
%\end{split}
%\end{equation*}
%i.e. setting $E=\frac{1}{2}\int_\Omega V^T U \; \diff \Omega$, we have
%$$\dfrac{dE}{dt}+\frac{1}{2}\int_{\partial \Omega} V^T C_\bn V \; \diff  \gamma=0.$$+
Using the matrix $P$, one can introduce new variables $V=P^{-1}U$. The original variable can be expressed as
$U=PV$ and the original system \eqref{eq:2_energy} will become 
\begin{equation}\label{eq:3:2}
 P\dpar{V}{t}+AP\dpar{V}{x}+BP \dpar{V}{y}=0 \Longleftrightarrow \dpar{U}{t}+AP\dpar{V}{x}+BP \dpar{V}{y}=0
\end{equation}
%Multiplying \eqref{eq:3:2} form the left  by $P^{-1/2}$ we obtain the system 
%\begin{equation}\label{eq:4}
%\dpar{V}{t}+P^{-1/2}AP^{1/2}\dpar{V}{x}+P^{-1/2}BP^{1/2}\dpar{V}{y}=0,
%\end{equation}
%which is symmetric since $P^{-1/2}AP^{1/2}n_x+P^{-1/2}BP^{1/2}n_y=P^{-1/2}C_\bn P^{-1/2}$.}
The system \eqref{eq:2_energy} admits an energy: if we multiply \eqref{eq:2_energy} by $V^T$, we first get
\begin{equation*}
\begin{split}
\int_\Omega V^T\dpar{U}{t}\; \diff \Omega&=-\int_{\Omega} V^T\bigg ( A\dpar{U}{x}+B\dpar{U}{y} \bigg )\; \diff \Omega=-\int_\Omega V^T \bigg ( AP \dpar{V}{x}+BP \dpar{V}{y}\bigg ) \; \diff \Omega\\
&=-\frac{1}{2}\int_{\partial \Omega} V^T C_\bn V \;\diff  \gamma
\end{split}
\end{equation*}
i.e. setting $E=\frac{1}{2}\int_\Omega V^T U \; \diff \Omega$, we have
$$\dfrac{dE}{dt}+\frac{1}{2}\int_{\partial \Omega} V^T C_\bn V \; \diff  \gamma=0.$$
To understand the role of the boundary conditions, we follow what is usually done for conservation laws, we consider the weak form of 
\eqref{eq:2_energy}: let $\varphi$ be a regular vector function in space and time. We multiply the equation by $\varphi^T$, integrate and get:
\begin{equation*}
\begin{split}
\int_0^T\int_\Omega\varphi^T \dpar{U}{t}\,\diff \Omega\; \diff t-\int_0^T\int_\Omega \bigg ( \dpar{\varphi}{x}^T A+\dpar{\varphi}{y}^T B\bigg ) U\; \diff \Omega\; \diff t+\frac{1}{2}\int_0^T \int_{\partial \Omega} \varphi^T A_\bn U\; \diff \gamma \; \diff t=0.
\end{split}
\end{equation*}
In order to enforce the boundary conditions weakly, we modify this relation by (note that $C_\bn V=A_\bn U$):
\begin{equation}
\label{eq:2}
\begin{split}
\int_0^T\int_\Omega\varphi^T \dpar{U}{t}\,\diff \Omega\; \diff t &-\int_0^T\int_\Omega \bigg ( \dpar{\varphi}{x}^T A+\dpar{\varphi}{y}^T B\bigg ) U\; \diff \Omega\; \diff t\\
&\qquad +\frac{1}{2}\int_0^T \int_{\partial \Omega} \varphi^T A_\bn U\; \diff \gamma \; \diff t\\
&\qquad \qquad =\int_0^T\int_{\partial \Omega} \varphi^T  \Pi_\bn \big (L_\bn(U)-G_\bn\big) \; \diff \gamma \; \diff t.
\end{split}
\end{equation}
The operator $\Pi_\bn$ depends on $\bx=(x,y)\in \partial \Omega$ and $\bn$ the outward unit normal at $\bx\in \partial \Omega$. It is chosen in such a way that:
\begin{enumerate}
\item 
For any $t$, \revt{the image of the boundary operator $L_\bn (U) $ is the same as the image of $\Pi_\bn L_\bn (U)$ in the weak formulation, i.e. there is no loss of boundary information, }
\item If $\varphi=V$, then $\dfrac{\diff E}{ \diff t}< 0$ follows.
\end{enumerate}
\medskip
A solution to this problem is given by the following: 
First, let $C_\bn=X_\bn\Lambda_\bn X_\bn^T$ where $\Lambda_\bn$ is a the diagonal matrix containing the eigenvalues  of $C_\bn$ and $X_\bn$ is the matrix which rows are the right
eigenvectors of $C_\bn$. We have $X_\bn^TX_\bn=\IdxM$ and choose:
\begin{equation}
\label{eq:3}
\Pi_\bn\big ( L_\bn(U)-G_\bn\big )=
 X_\bn \Lambda_\bn^-
 \begin{pmatrix} R_\bn \\ \mathbf{0}_{n-n_0}
 \end{pmatrix} X_\bn^T-
 \begin{pmatrix}G_\bn \\ \mathbf{0}_{n-n_0}\end{pmatrix},
\end{equation}
where $\Lambda_\bn^-$ are the negative eigenvalues only and $n$ denotes the \remi{number of unknowns for the system.} 
Here we have introduced the operator $R_\bn$ which is   $L_\bn$  written using characteristic variables.% and not $U$. 
%We will write
%$$\Pi_\bn\big ( L_\bn(U)-G_\bn\big )=\big ( L_\bn(U)-G_\bn\big )\delta_{\bx}$$
%in the sequel.
\\
\\
\bigskip
In the following, we explain the implementation steps. To a large content we refer to \cite{nordstrom2017roadmap}.
To compute $\Pi$ we first consider again the strong implementation  of the problem. We have
%\remi{To simplify the readability,  we suppress to underline the matrices to denote the semidiscrete setting in the following part.}% (we are skipping the terms at $x=1$). 
%\todo{Maybe we have to do it!}
\begin{equation}\label{eq:start}
\frac{\diff{} }{\diff t}V^TU= -\int_{\partial \Omega} V^T C_\bn V\; \diff \gamma.
\end{equation}
Using $C_\bn=X_\bn \Lambda_\bn  X_\bn ^T$, % where $\Lambda_\bn $ are the eigenvalues of $C_\bn$ and $X_\bn$ is the matrix which rows are the right
%eigenvectors of $C_\bn$. Then, 
we obtain
\begin{equation}
V^T C_\bn V= V^TX_\bn \Lambda_\bn X_\bn^T V= \left(X_\bn ^TV \right)^T \Lambda_\bn  \left(X_\bn^TV \right)
= \begin{pmatrix}
W_\bn^+\\W_\bn^-
\end{pmatrix}^T
 \begin{pmatrix}
\Lambda_\bn^+ & 0 \\0 & \Lambda_\bn^- 
\end{pmatrix}
\begin{pmatrix}
W_\bn^+\\W_\bn^-
\end{pmatrix}
\end{equation}
with $W_\bn^+=\left(X_\bn^TV \right)^+$ are the ingoing waves and 
they have the size of the positive eigenvalues $\Lambda_\bn^+$.
Analogously,   $W_\bn^-=\left(X_\bn^TV \right)^-$ are the outgoing waves with size of $\Lambda_\bn^-$.
%A general homogeneous boundary condition is $W_\bn^+=R_\bn W^-$,  since with that 
%form,  we get
A general homogeneous boundary condition is $\revt{W_\bn^-=R_\bn W_\bn^+}$,  since with that, and a proper choice of $R_\bn$,  we get  %\todo{This should be changed to the energy}
%\begin{equation}\label{eq:strong_bc_final}
%V^T C_\bn V= (W_\bn^-)^T \left( \Lambda_\bn^-+R_\bn^T\Lambda_\bn^+R_\bn\right)W_\bn^-\geq 0
%\end{equation}
\begin{equation}\label{eq:strong_bc_final}
V^T C_\bn V= (W_\bn^+)^T \left( \Lambda_\bn^++R_\bn^T\Lambda_\bn^-R_\bn\right)W_\bn^+\geq 0
\end{equation}
and so the decrease of energy in \eqref{eq:start} if the matrix in the bracket is positive semidefinite. \\

%%%%%%
Next, we will impose the boundary conditions weakly. Assume now that we have chosen an $R_\bn$ such that
\begin{equation}\label{eq:Weak_bc}
   \left( \Lambda_\bn^++R_\bn^T\Lambda_\bn^-R_\bn\right) \geq 0.
\end{equation}
\revt{Here, the existence of such $R_\bn$ is ensured through our assumption that our boundary value problem  \eqref{eq:2_energy} is well posed, c.f. \cite{nordstrom2017roadmap}.}
The energy is given  
\begin{equation}\label{eq:energy_general}
\int_{\Omega} V^T \dpar{U}{t}\; \diff \Omega+ \frac{1}{2}\int_{\partial \Omega} V^TA_\bn U \diff \gamma
=\int_{\partial \Omega}V^T \Pi_\bn \left(W_\bn^--R_\bn W_\bn^+ \right)  \; \diff \gamma.
\end{equation}
We add the transpose of \eqref{eq:energy_general} to itself 
and consider 
\begin{equation}
\frac{\diff{} }{\diff t}\int_{\partial \Omega} V^TU\; \diff \Omega 
= -\int_{\partial \Omega}V^TA_\bn U\diff \gamma +\int_{\partial \Omega} V^T \Pi_\bn \left(W_\bn^--R_\bn W_\bn^+ \right)
+  \left(W_\bn^--R_\bn W_\bn^+ \right)^T\Pi_\bn^TV \; \diff \gamma .
\end{equation}
We define $\tilde{\Pi}_\bn$ such that  $V^T \Pi_\bn=(W_\bn^-)^T\tilde{\Pi}_\bn$ and get for the 
integrands
\begin{equation}
-(W_\bn^+)^T\Lambda_\bn^+W_\bn^+-(W_\bn^-)^T\Lambda_\bn^-W_\bn^-+(W_\bn^-)^T\tilde{\Pi}_\bn \left(W_\bn^--R_\bn W_\bn^+ \right)  
+  \left(W_\bn^--R_\bn W_\bn^+ \right) ^T\tilde{\Pi}_\bn^T  W_\bn^-.
\end{equation}
Collecting the terms, we obtain 
\begin{equation}\label{eq:Matrix}
\begin{pmatrix}
W_\bn^+\\W_\bn^-
\end{pmatrix}^T
 \underbrace{\begin{pmatrix}
-\Lambda_\bn^+  &  - R_\bn^T\tilde{\Pi}_\bn^T \\ - \tilde{\Pi}_\bn R_\bn & 
- \Lambda_\bn^- +\tilde{\Pi}_\bn+\tilde{\Pi}_\bn^T
\end{pmatrix}}_{=:WB}
\begin{pmatrix}
W_\bn^+\\W_\bn^-
\end{pmatrix}.
\end{equation} 
We must select $\tilde{\Pi}_\bn$ such that the matrix $WB$ is negative definite.
Now,  let us use the strong condition \eqref{eq:Weak_bc}. 
By adding and subtracting, we obtain 
\begin{equation}\label{eq:Matrix_2}
\underbrace{\begin{pmatrix}
W_\bn^+\\W_\bn^-
\end{pmatrix}^T
 \begin{pmatrix}
R^T_\bn\Lambda_\bn^- R_\bn & - R_\bn^T\tilde{\Pi}_\bn^T  \\ -\tilde{\Pi}_\bn R_\bn
&- \Lambda_\bn^- +\tilde{\Pi}_\bn+\tilde{\Pi}_\bn^T
\end{pmatrix}
\begin{pmatrix}
W_\bn^+\\W_\bn^-
\end{pmatrix}}_{=:Q_w}
\underbrace{-\left((W_\bn^+)^T(\Lambda_\bn^++R_\bn^T\Lambda_\bn^-R_\bn) W_\bn^+\right).}_{\leq 0 \text{ by \eqref{eq:Weak_bc}.}}
\end{equation}
By rearranging and choosing $\tilde{\Pi}_\bn=\Lambda_\bn^-$, we get
\begin{equation*}
Q_w=
\begin{pmatrix}
R_\bn  W_\bn^+\\ W_\bn^-
\end{pmatrix}^T
 \underbrace{ \begin{pmatrix}
1 &  -1 \\ -1&1
\end{pmatrix}}_{=:G_\bn}
\otimes\Lambda^-
\begin{pmatrix}
R_\bn  W_\bn^+\\ W_\bn^-
\end{pmatrix}.
\end{equation*}
Since $G_\bn$ has the eigenvalues $0$ and $2$ and we obtain stability
\begin{equation}\label{eq:energy_end}
\dfrac{d}{dt} \frac{1}{2}
\int_\Omega V^TU\; \diff x +\frac{1}{2}\int_{\partial \Omega} V^TA_\bn U\; \diff \gamma\leq  \frac{1}{2} \int_{\partial \Omega}V^T\Pi_\bn\big ( L_\bn(U)-G_\bn\big )\; \diff\gamma
\end{equation}
thanks to \eqref{eq:Matrix_2} and \eqref{eq:strong_bc_final}.
We will give a concrete example in Section \ref{subsec_R13}.

\section{Numerical Simulations}\label{sec:Numerics}

Here, we demonstrate that a pure Galerkin scheme is stable if 
 we impose the boundary conditions 
weakly and use an adequate boundary operator as described in section \ref{sec:boundary_operator}. 
We consider several different examples and analyze different properties in this context (error behavior, 
eigenvalues, etc.). 
As basis functions, we use Bernstein or Lagrange polynomials of different orders resulting in 
Galerkin schemes of second to fourth order
on triangular meshes. We denote with $B1,\;B2,\;B3$ the Galerkin method using Bernstein polynomials 
with polynomial order $1,2$ or $3$ similar denoting $P1,\;P2,\;P3$ by applying a Lagrange basis. 
The basic implementation is done in the RD framework, see \cite{abgrall2018general}.
The two approaches only differ slightly.
The time integration is done via strong stability preserving Runge-Kutta methods of second to fourth order, see 
\cite{gottlieb2011strong} for details.
We  use always the same order for space and time discretization.

\subsection{Two-Dimensional Scalar Equations}\label{sub:scalar}
We consider a two-dimensional scalar hyperbolic equation of the form
\begin{equation}\label{eq:scalar}
\dpar{U}{t} + \ba(x,y)\cdot \nabla U = 0, \quad (x,y) \in \Omega, \ t > 0, \\
\end{equation}
where $\ba = (a,b)$ is the advection speed and $\Omega$ the domain.
In this subsection, the initial condition is 
 given by
\begin{equation*}
U(x,y,0)=\begin{cases}
          \e{-40r^2},\quad \text{ if }  r=\sqrt{(x-x_0)^2\rev{+}(y-y_0)^2}<0.25, \\
           0, \qquad \text{ otherwise }
          \end{cases}\\
\end{equation*}
It is a small bump with height one located  at $(x_0,y_0)$.
We consider homogeneous boundary conditions $G_\bn\equiv0$
and further let the boundary matrix $\revb{L_\bn}$ be the identity matrix
at the inflow part of $\partial \Omega$ (i.e. $\partial \Omega^-)$. 
 The boundary conditions reads $L_\bn U=U=0$ for $(x,y)\in \partial \Omega^-, \;t >0$
 which means that the incoming waves are set to zero.

\subsubsection*{Linear advection}
In our first test, we are considering the linear advection 
equation in $\Omega=[0,1]^2$.
The advection speed is assumed to be constant. 
The components of the speed vector $\ba$ are given by $(a,b)^T=(1,0)$
and so the flux is given by $\bbf(U)=\bs{a}U$ with $\bs{a}=(1,0)$.
We have inflow / outflow conditions on the left / right boundaries  %\todo{Speaking}
and periodic boundary condition on the horizontal boundaries. %the bottom and the top are fixed. 
In our first test, we use Bernstein polynomials and a fourth order
C.G. scheme. The boundary operators are computed
using the technique developed in section \ref{sec:boundary_operator}
where the positive eigenvalues are set to zero and the negative ones are used 
in the construction of $\Pi$.
For the time discretization we apply strong stability preserving Runge-Kutta
 (SSPRK) scheme with 5 stages and fourth order 
with CFL=0.3. We use $1048$ triangles.
In figure \eqref{linear_advection_1}, we plot the results at three times. Clearly, the
scheme is stable, also at the outflow boundary. \revs{The maximum value is at the end $1.001$ and the minimum is $-0.0199$ where the starting values are $1.000$ and $0.000$.}
 \begin{figure}[!htp]
 \centering 
   \begin{subfigure}[b]{0.3\textwidth}
    \includegraphics[width=\textwidth]{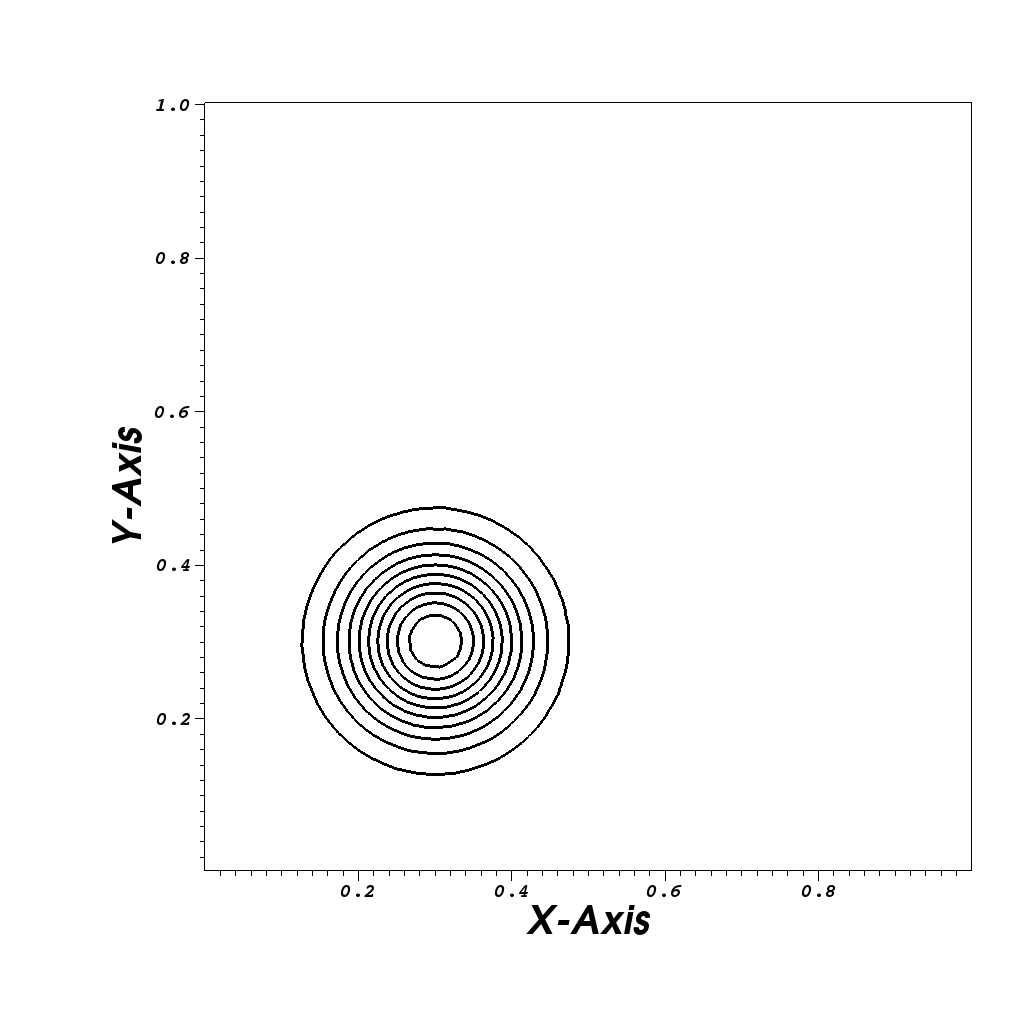}
    \caption{Initial data}
  \end{subfigure}%
   \begin{subfigure}[b]{0.3\textwidth}
    \includegraphics[width=\textwidth]{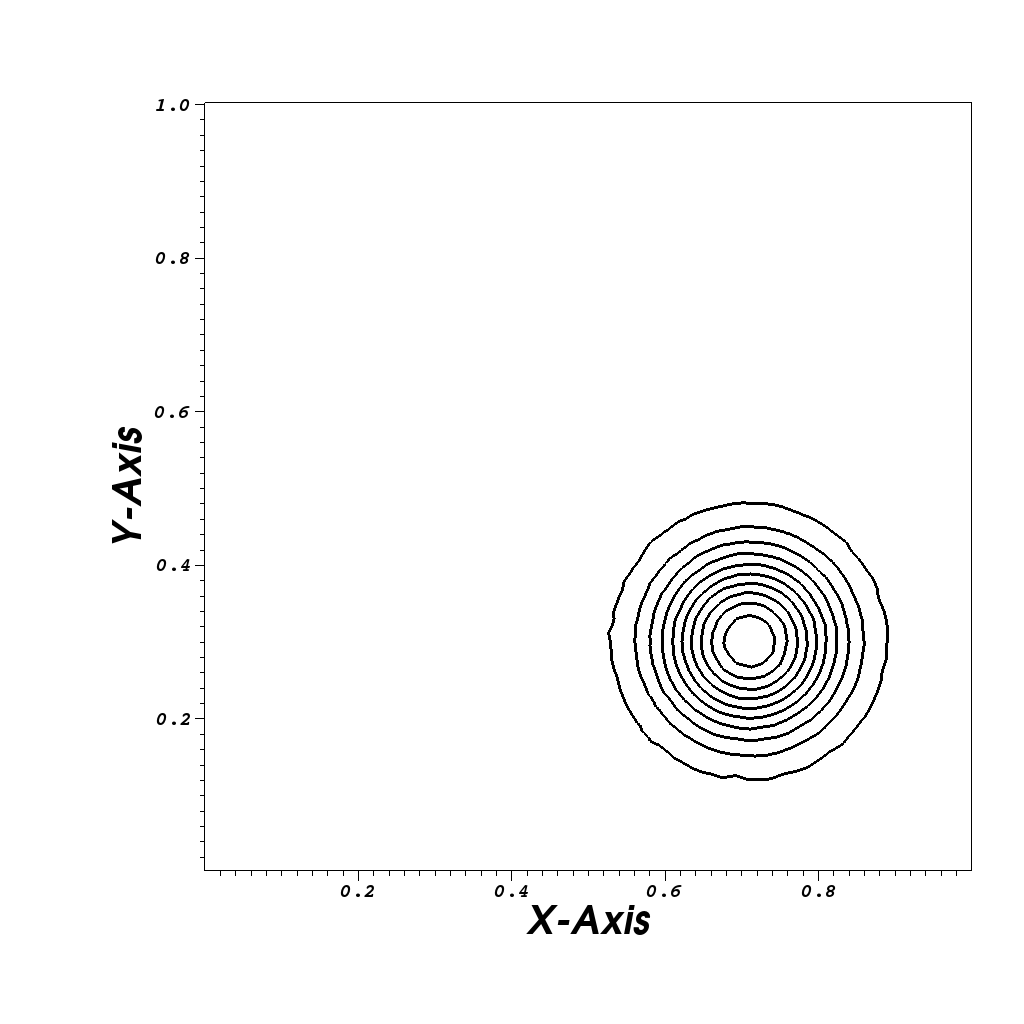}
    \caption{ 100 steps}
  \end{subfigure}%
     \begin{subfigure}[b]{0.3\textwidth}
    \includegraphics[width=\textwidth]{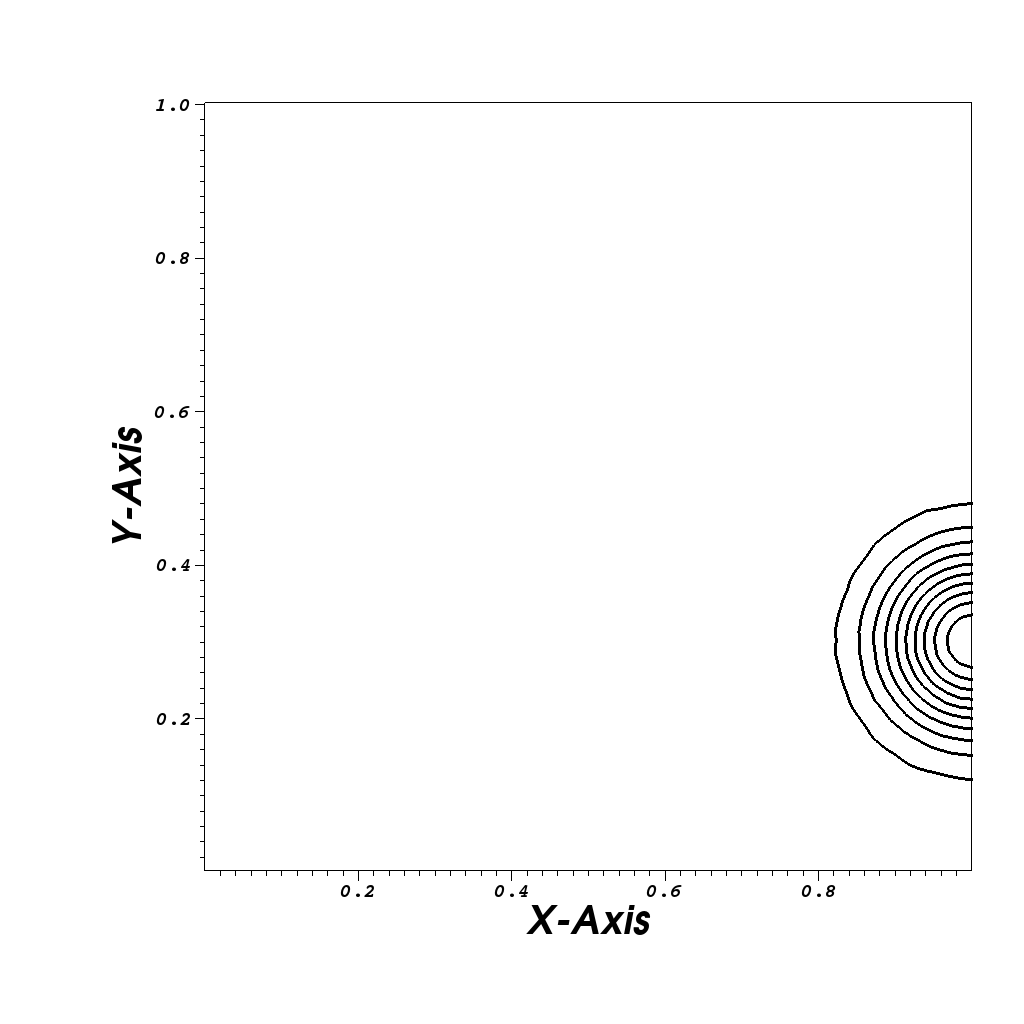}
    \caption{ 173 steps}
  \end{subfigure}%
     \caption{\label{linear_advection_1} 4-th order scheme in space and time}
 \end{figure}
Next, we check the real parts of the eigenvalues of our problem using formula \eqref{test}
for different orders, different bases (Bernstein and Lagrange) and different meshes. 
For the calculation of the eigenvalues of \eqref{test}, 
we use a Petsc routine \cite{petsc-web-page, petsc-user-ref} which can calculate up to 500 eigenvalues\footnote{we have used simple, double and quadruple precision, the results remain the same upto machine precision.}. 
Have in mind that in contrast to  DG and multi-block FD setting, 
the mass matrix in the pure Galerkin scheme is not block diagonal. 
Therefore, we can not split the eigenvalue calculation to each block matrix and have to consider the 
whole matrix $\mat{M}$ and therefore $\mat{Q}$. \rev{Different from before, we consider the complete skew-symmetric spatial operator $\mat{Q}+\mat{Q}^T$ 
in the whole domain and not in one element only, where it is equal to $\mat{B}$.} Every coefficient of the numerical approximation belongs to one degree of freedom 
and we obtain the same number of eigenvalues as number of DoFs are used. 
However, for the coefficients which belong to DoFs inside the domain,  in all calculations
we obtain zero up to machine precision. %\todo{Have to be explained more. checking with Remi..}
To get  useful results, we decrease the number of elements in the following calculations
and provide only the most negative and positive ones in tables  \ref{P1_eigenvalue}-\ref{B3_eigenvalue}
where we give results with and without the application of the SAT boundary operators.

%@{\extracolsep{\fill}}*4c@{}
\begin{table}[!ht]
\centering
  \caption{Eigenvalue of the operators \eqref{eq:matrix_eigen}
  with and without the boundary operators using $P1/B1$ (41DoFs).}
  \label{P1_eigenvalue}
    \small 
  \begin{tabular*}{\linewidth}{@{\extracolsep{\fill}}*4l@{}}
    \toprule
neg. eigen. of \rev{$\mat{Q}+\mat{Q}^T$} &pos. eigen. of \rev{$\mat{Q}+\mat{Q}^T$} & neg. eig.  for BT from \eqref{eq:matrix_eigen}  &pos. eig. for BT from \eqref{eq:matrix_eigen} \\
    \midrule
 $ -0.2317$  &    $ 0.2317 $&$    -0.3135$&$        6.0289 \; \cdot10^{-17}$\\
 $-0.1839$  &     $0.1839  $&$    -0.2555$&$        3.8787\;  \cdot 10^{-17}$\\
$ -0.1250$  &    $ 0.1250  $&$    -0.2317$&$        3.0097\;  \cdot 10^{-17}$\\
$  -6.6074\;  \cdot10^{-2}$&  $ 6.6074\;  \cdot 10^{-2}$& $-0.1848$&$        2.3845\;  \cdot 10^{-17}$\\ 
 $ -5.9935\;  \cdot 10^{-2}$&   $5.9935\;  \cdot 10^{-2}$&$ -0.1839$&$        1.7762\;  \cdot 10^{-17}$\\
 $ -7.2852\;  \cdot 10^{-17}$&   $3.9582\;  \cdot 10^{-17}$&$ -0.1250$&$        1.2997\;  \cdot 10^{-17}$\\
 $ -4.0170\;  \cdot 10^{-17}$&   $3.4527\;  \cdot 10^{-17}$&$ -0.1181$&$        9.7095\;  \cdot 10^{-18}$\\
$  -3.0744\;  \cdot 10^{-17}$& $  2.6742\;  \cdot 10^{-17}$&$  -7.7263\;  \cdot 10^{-2}$&$   9.4396\;  \cdot 10^{-18}$\\
$  -2.9953\;  \cdot 10^{-17}$&$   2.4023\;  \cdot 10^{-17}$&$  -6.6074\;  \cdot 10^{-2}$&$   9.4396\;  \cdot 10^{-18}$\\
$  -2.3732\;  \cdot 10^{-17}$& $  1.8552\;  \cdot 10^{-17}$&$  -5.9935\;  \cdot 10^{-2}$&$   6.6812\;  \cdot 10^{-18}$\\
 $ -1.9299\;  \cdot 10^{-17}$& $  1.2938\;  \cdot 10^{-17}$&$  -7.2894\;  \cdot 10^{-17}$&$   5.6739\;  \cdot 10^{-18}$\\
    \hline
  \end{tabular*}
\end{table}

\begin{table}[!ht]
\centering
  \caption{Eigenvalue of the operators \eqref{eq:matrix_eigen}
  with and without the boundary operators using $B2$ (145DoFs).}
  \label{B2_eigenvalue}
    \small 
  \begin{tabular*}{\linewidth}{@{\extracolsep{\fill}}*4l@{}}
    \toprule
neg. eigen. of $\mat{Q}+\mat{Q}^T$ &pos. eigen. of $\mat{Q}+\mat{Q}^T$& neg. eig.  for BT from \eqref{eq:matrix_eigen}  &pos. eig. for BT from \eqref{eq:matrix_eigen} \\
    \midrule
    $   -0.1343 $&$     0.1343   $&$   -0.1924   $&$     2.8085\;  \cdot 10^{-16} $ \\
      $      -0.1186 $&$      0.1187  $&$    -0.1746  $&$      2.5390\;  \cdot 10^{-16}$ \\
        $     -9.7804\;  \cdot 10^{-2}$&$   9.7804\;  \cdot 10^{-2}$&$ -0.1524  $&$     2.4456\;  \cdot 10^{-16}$ \\
          $  -6.1147\;  \cdot 10^{-2}  $&$ 6.1147\;  \cdot 10^{-2} $&$-0.1343    $&$    2.2495\;  \cdot 10^{-16}$ \\
      $       -5.8452\;  \cdot 10^{-2} $&$  5.8451\;  \cdot 10^{-2} $&$-0.1186  $&$     2.2016\;  \cdot 10^{-16} $\\
        $     -2.6008\;  \cdot 10^{-2} $&$  2.6008\;  \cdot 10^{-2} $&$-0.1088  $&$     2.0043\;  \cdot 10^{-16} $\\
      $       -1.6694\;  \cdot 10^{-2} $&$  1.6694\;  \cdot 10^{-2} $&$ -9.7804\;  \cdot 10^{-2}$&$   1.9144\;  \cdot 10^{-16}$ \\
       $      -1.0862\;  \cdot 10^{-2} $&$  1.0862\;  \cdot 10^{-2} $&$ -6.9258\;  \cdot 10^{-2}$&$   1.8695\;  \cdot 10^{-16}$ \\
        $     -9.4054\;  \cdot 10^{-3}$&$  9.4054\;  \cdot 10^{-3} $&$ -6.1147\;  \cdot 10^{-2}$&$   1.8192\;  \cdot 10^{-16} $\\
         $  -2.7617\;  \cdot 10^{-16}  $&$ 2.8085\;  \cdot 10^{-16} $&$ -5.8452\;  \cdot 10^{-2}  $&$ 1.7825\;  \cdot 10^{-16}$ \\
         $   -2.5757\;  \cdot 10^{-16} $&$  2.5390\;  \cdot 10^{-16} $&$  -3.0333\;  \cdot 10^{-2} $&$  1.7762\;  \cdot 10^{-16}$ \\
    \hline
  \end{tabular*}
\end{table}

\begin{table}[!ht]
\centering
  \caption{Eigenvalue of the operators \eqref{eq:matrix_eigen}
  with and without the boundary operators using $B3$ (313DoFs).}
  \label{B3_eigenvalue}
    \small 
  \begin{tabular*}{\linewidth}{@{\extracolsep{\fill}}*4l@{}}
    \toprule
neg. eigen. of $\mat{Q}+\mat{Q}^T$ &pos. eigen. of $\mat{Q}+\mat{Q}^T$& neg. eig.  for BT from \eqref{eq:matrix_eigen}  &pos. eig. for BT from \eqref{eq:matrix_eigen} \\
    \midrule
$             -9.3746\;  \cdot 10^{-2}  $&$   9.3746\;  \cdot 10^{-2}$&$ -0.1417  $&$      2.2109\;  \cdot 10^{-16} $\\
 $            -8.6774\;  \cdot 10^{-2} $&$  8.6774\;  \cdot 10^{-2} $&$-0.1345    $&$    2.1270\;  \cdot 10^{-16}$\\
  $           -7.7346\;  \cdot 10^{-2} $&$  7.7346\;  \cdot 10^{-2} $&$-0.1260    $&$    2.0375\;  \cdot 10^{-16}$\\
   $          -5.1492\;  \cdot 10^{-2} $&$  5.1492\;  \cdot 10^{-2} $&$ -9.3746\;  \cdot 10^{-2}$&$   1.9449\;  \cdot 10^{-16}$\\
    $         -5.0243\;  \cdot 10^{-2} $&$  5.0243\;  \cdot 10^{-2} $&$ -9.1241\;  \cdot 10^{-2}$&$   1.9060\;  \cdot 10^{-16}$\\
     $        -3.1541\;  \cdot 10^{-2} $&$  3.1541\;  \cdot 10^{-2} $&$ -8.6774\;  \cdot 10^{-2}$&$   1.8710\;  \cdot 10^{-16}$\\
   $          -2.3206\;  \cdot 10^{-2} $&$  2.3206\;  \cdot 10^{-2} $&$ -7.7354\;  \cdot 10^{-2}$&$   1.8543\;  \cdot 10^{-16}$\\
    $         -1.6530\;  \cdot 10^{-2} $&$  1.6530\;  \cdot 10^{-2} $&$ -5.7862\;  \cdot 10^{-2}$&$   1.7657\;  \cdot 10^{-16}$\\
     $        -1.4887\;  \cdot 10^{-2} $&$  1.4887\;  \cdot 10^{-2} $&$ -5.1492\;  \cdot 10^{-2}$&$   1.6555\;  \cdot 10^{-16}$\\
      $      -4.4006\;  \cdot 10^{-3 } $&$ 4.4006\;  \cdot 10^{-3} $&$ -5.0243\;  \cdot 10^{-2} $&$  1.6338\;  \cdot 10^{-16}$\\
       $    -3.0050\;  \cdot 10^{-3} $&$  3.0050\;  \cdot 10^{-3} $&$ -3.6266\;  \cdot 10^{-2}  $&$ 1.6222\;  \cdot 10^{-16}$\\
        $    -2.1187\;  \cdot 10^{-3} $&$  2.1187\;  \cdot 10^{-3} $&$ -3.1541\;  \cdot 10^{-2} $&$  1.6088\;  \cdot 10^{-16}$\\
         $   -1.8526\;  \cdot 10^{-3} $&$  1.8526\;  \cdot 10^{-3} $&$ -2.8790\;  \cdot 10^{-2} $&$  1.5858\;  \cdot 10^{-16}$\\
          $ -2.1399\;  \cdot 10^{-16} $&$  2.2109\;  \cdot 10^{-16} $&$ -2.3210\;  \cdot 10^{-2}  $&$ 1.5731\;  \cdot 10^{-16}$\\
    \hline
  \end{tabular*}
\end{table}

We see from tables \ref{P1_eigenvalue}-\ref{B3_eigenvalue}  that the
the boundary operator decreases the negative 
eigenvalues and forces the positive ones to zero (up to machine precision). 
For third and fourth order, we print only the case using Bernstein polynomials. 
The applications of Lagrange polynomials lead only to slightly bigger amounts
of positive and negative eigenvalues of the $\mat{Q}+\mat{Q}^T$ operator 
(i.e maximum eigenvalue is $ 0.11713334374388217$ for $P3$).
However, the results are similar after applying the SAT procedure, we obtain only negative or zero eigenvalues.\\
We also mention that for higher degrees and more DoFs, we may strengthen the SAT terms to guarantee 
that the eigenvalues are negative and /or forced to zero.
All of our investigations are in accordance with the analysis done in subsection
\ref{section_eigenvalue} and 
all of our calculations demonstrate that a pure Galerkin 
scheme is stable if a proper boundary procedure is used.
\begin{remark}
 Finally, we  did a couple of additional simulations changing both,
  the domain $\Omega$ (circles, pentagons, etc.)
 and the speed vector including also some horizontal movement. 
 All of our calculations remained stable if the boundary approach 
  from section \ref{sec:boundary_operator} was used .
\end{remark}

This is in contradiction of a common belief in the \rev{hyperbolic} research community
about continuous Galerkin schemes.

But what are the reasons for this belief?
In our opinion, one of the major issues is that the chosen quadrature rule in the numerical integration 
differs from the the one used in the differential operators  and without 
artificial stabilization terms the continuous Galerkin scheme collapses,
and the corresponding $\mat{Q}$ matrix does not become almost skew-symmetric.

% 
% 
% All of our calculations demonstrate that a pure Galerkin scheme is stable if 
% a proper boundary procedure is used. \\
% Nevertheless, the influence of the boundary operators on the eigenvalues
% and our observations, i.e. the positive eigenvalues are tending to zero (and not further) where  the negative ones
% are still deceasing, should be considered more precise.
% 
% \todo{This should be maybe re-orderd and brought before}
% We focus on this topic. It is related to the question 
% about well posedness. 

\subsubsection*{Inexactness of the Quadrature Rule}

To support our statement, we provide the following example.
We consider the same problem as before, but in the Galerkin scheme we 
lower the accuracy of our quadrature rule to \rev{calculate
the mass matrix and the conditions at the boundary procedure. 
Before, we used always a quadrature rule which is accurate up to sixth order.
Then, we lower  the quadrature rule for the surface integral to five, the rest remain the same. Please be
aware that in the Galerkin approach, we apply integration by parts before formulating the variation formulation.}
We decrease the CFL number to $0.01$ for stability reasons.
However, as it is shown in figure \ref{linear_advection_crash}
the scheme crashes after some time even with this super low CFL number. In pictures \ref{fig:crash}, the structure 
of the bump can still be seen, but, simultaneously, 
the  minimum value is $\approx -2.996$ and the maximum value is around $2.7$.

 \begin{figure}[!htp]
 \centering 
   \begin{subfigure}[b]{0.3\textwidth}
    \includegraphics[width=\textwidth]{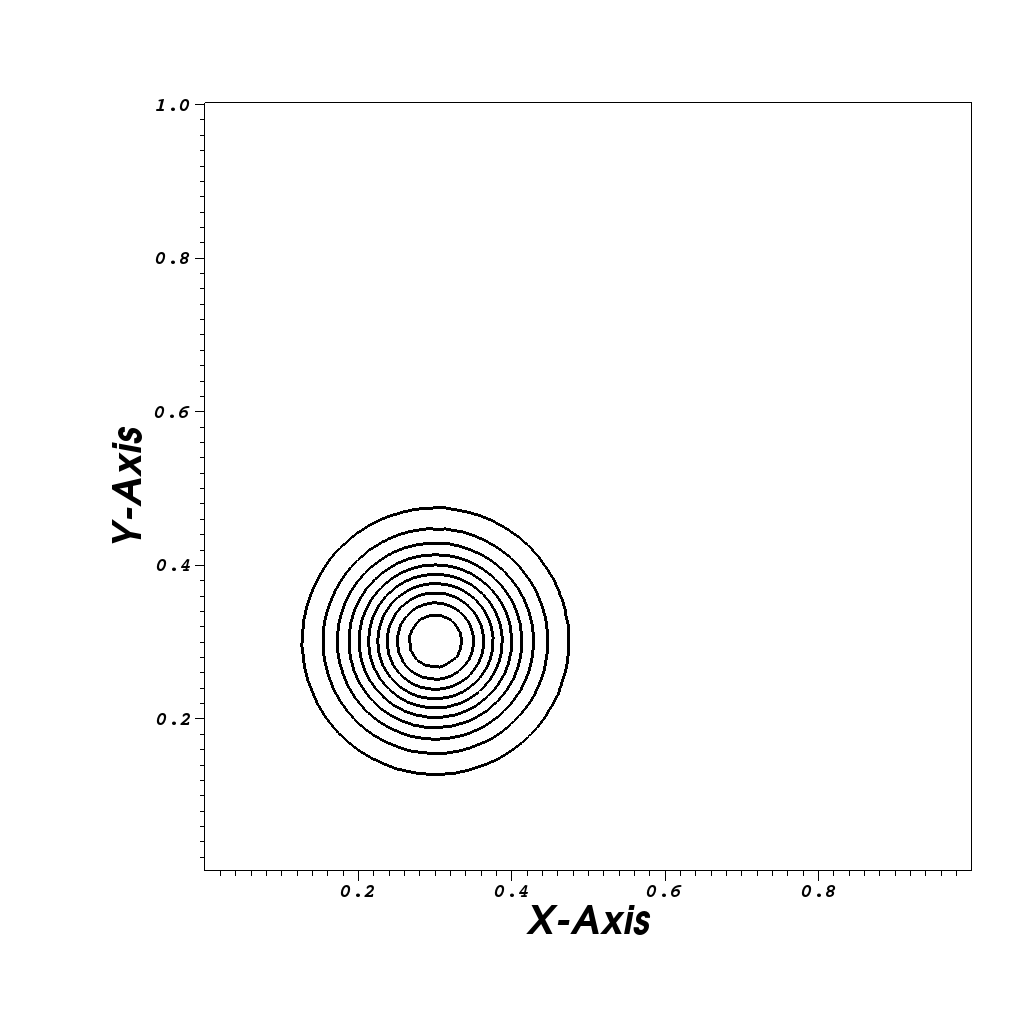}
    \caption{Initial data}
  \end{subfigure}%
   \begin{subfigure}[b]{0.3\textwidth}
    \includegraphics[width=\textwidth]{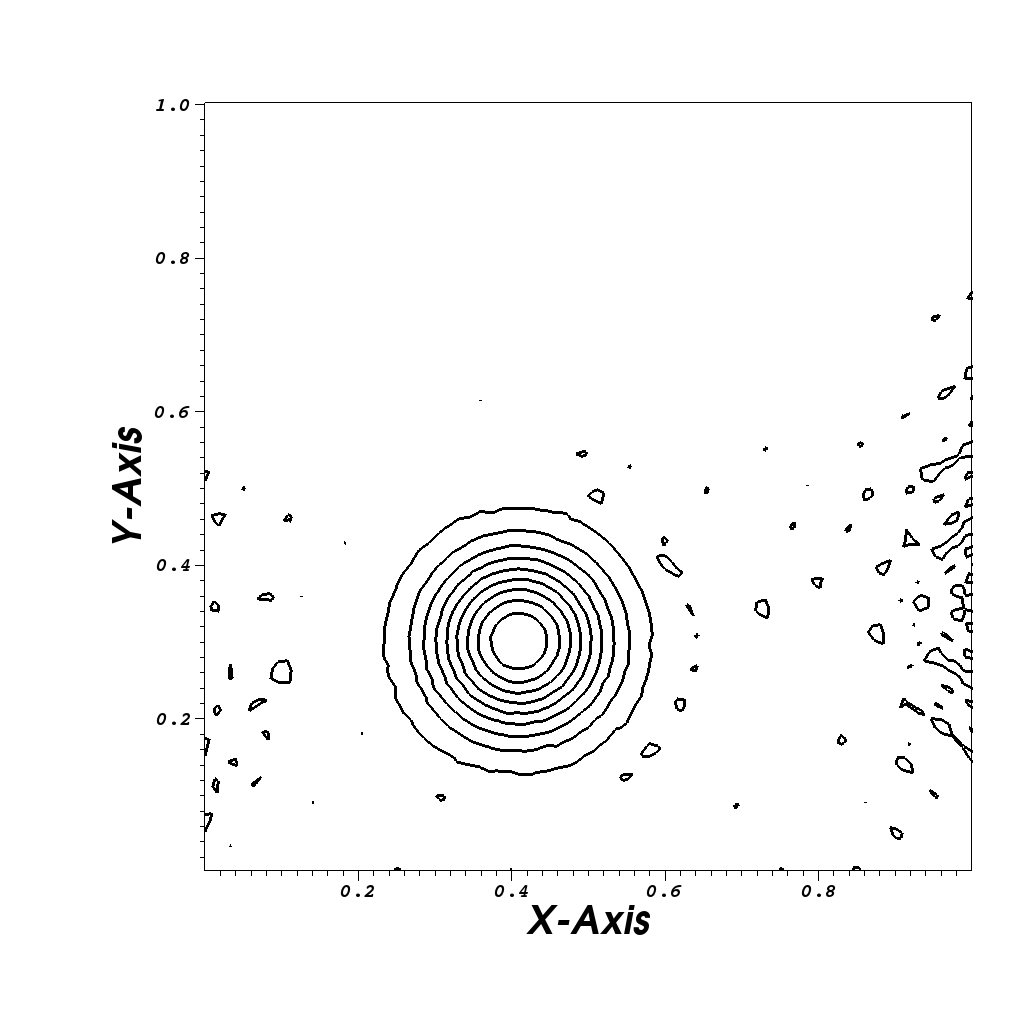}
    \caption{300 steps}
  \end{subfigure}%
     \begin{subfigure}[b]{0.3\textwidth}
    \includegraphics[width=\textwidth]{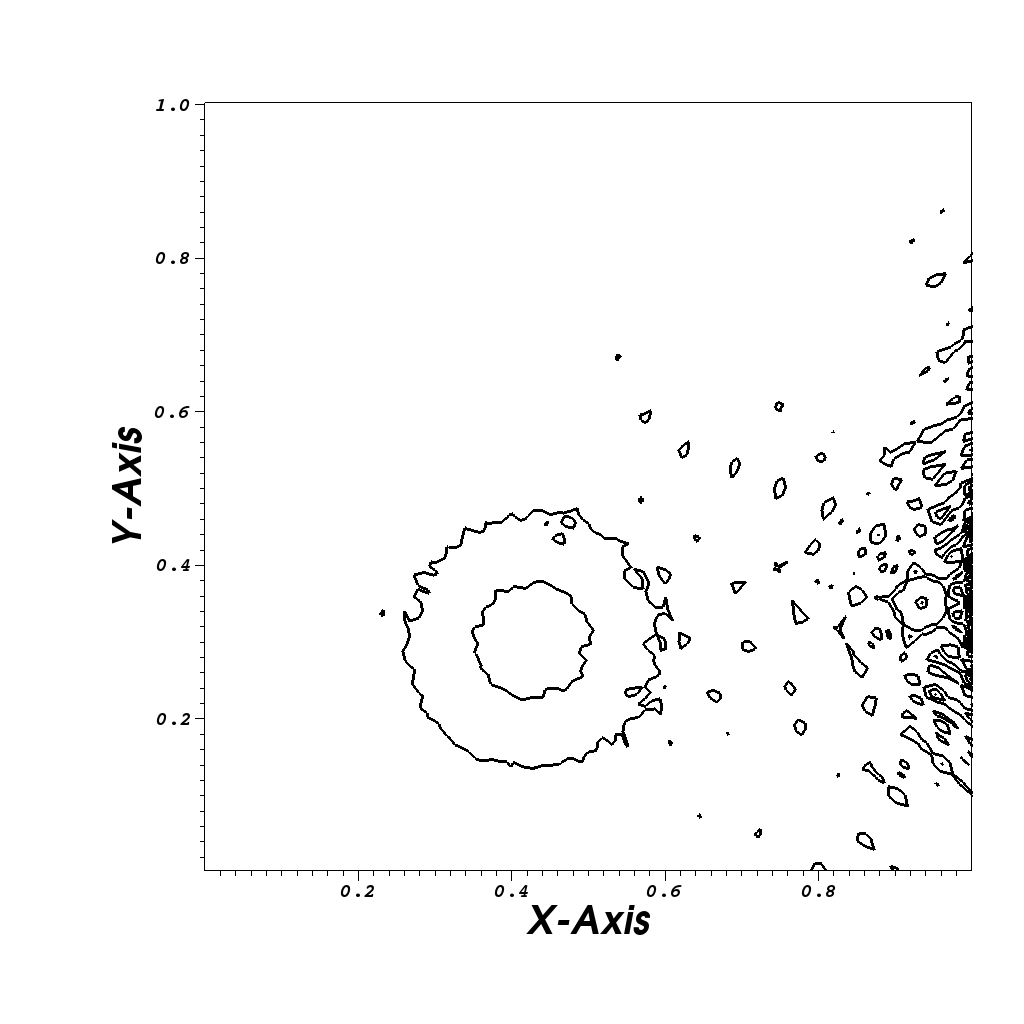}
    \caption{\label{fig:crash} 350 steps}
  \end{subfigure}%
     \caption{\label{linear_advection_crash} 4-th order scheme in space and time}
 \end{figure}
Additional time steps will lead to a complete crash of the test. 
At $400$ steps the maximum value is $74.45$ and the minimum is at $-82.15$.
Here, again nothing has changed from the calculations before except that 
the quadrature rules are changed  which leads to an error in the interior of  the spatial matrix $\mat{Q}$, which cannot be stabilized with the SAT boundary treatment. We will focus on this test again in the second part of the paper series \cite{nonlinear} to demonstrate 
the entropy correction term as presented in \cite{abgrall2018general} and applied in \cite{abgrall2018connection, ranocha2019reinterpretation} can also be seen as a stabilization factor for linear problems.

% 
% 
%  \begin{figure}[!htp]
%  \centering 
%      \includegraphics[width=0.4\textwidth]{Figs/Carre_P3_Without_ref2_00000.png}
%      \includegraphics[width=0.4\textwidth]{Figs/Carre_P3_Without_ref2_2000000.png}
%      
%  \end{figure}
%       $B3$, SSPRK$(5,4)$, CFL=0.01, 4477 elements  
% 
% 
% 
% 
%  \begin{figure}[!htp]
%  \centering 
%      \includegraphics[width=0.4\textwidth]{Figs/Carre_P3_Without_ref2_3000000.png}
%      \includegraphics[width=0.4\textwidth]{Figs/Carre_P3_Without_ref2_3500000.png}
%  \end{figure}
%      $B3$, SSPRK$(5,4)$, CFL=0.01, 4477 elements  
% 
% 

\subsubsection*{Linear Rotation}
In the next test we consider an advection problem with variable coefficients.
The speed vector has components
\[ a = 2\pi y, \ b = -2\pi x. \]
The initial and boundary conditions are given by 
\begin{align*}
U(x,y,0)&=\begin{cases}
          \e{-40r^2},\quad \text{ if }  r=\sqrt{x^2+(y-0.5)^2}<0.25, \\
           0, \qquad \text{ otherwise }
          \end{cases}\\
 U&=0, \quad (x,y)\in \partial \Omega, \; t>0.
\end{align*}
The problem is defined on the unit disk $\D=\{(x,y)\in \R^2| \sqrt{x^2+y^2}<1 \}$.
The small bump rotates in the clockwise direction in a circle around zero.  
In figure \ref{fig:initial} the initial state is presented where 
figure \ref{fig:mesh_rot} shows the used mesh.
 \begin{figure}[!htp]
 \centering 
   \begin{subfigure}[b]{0.4\textwidth}
    \includegraphics[width=\textwidth]{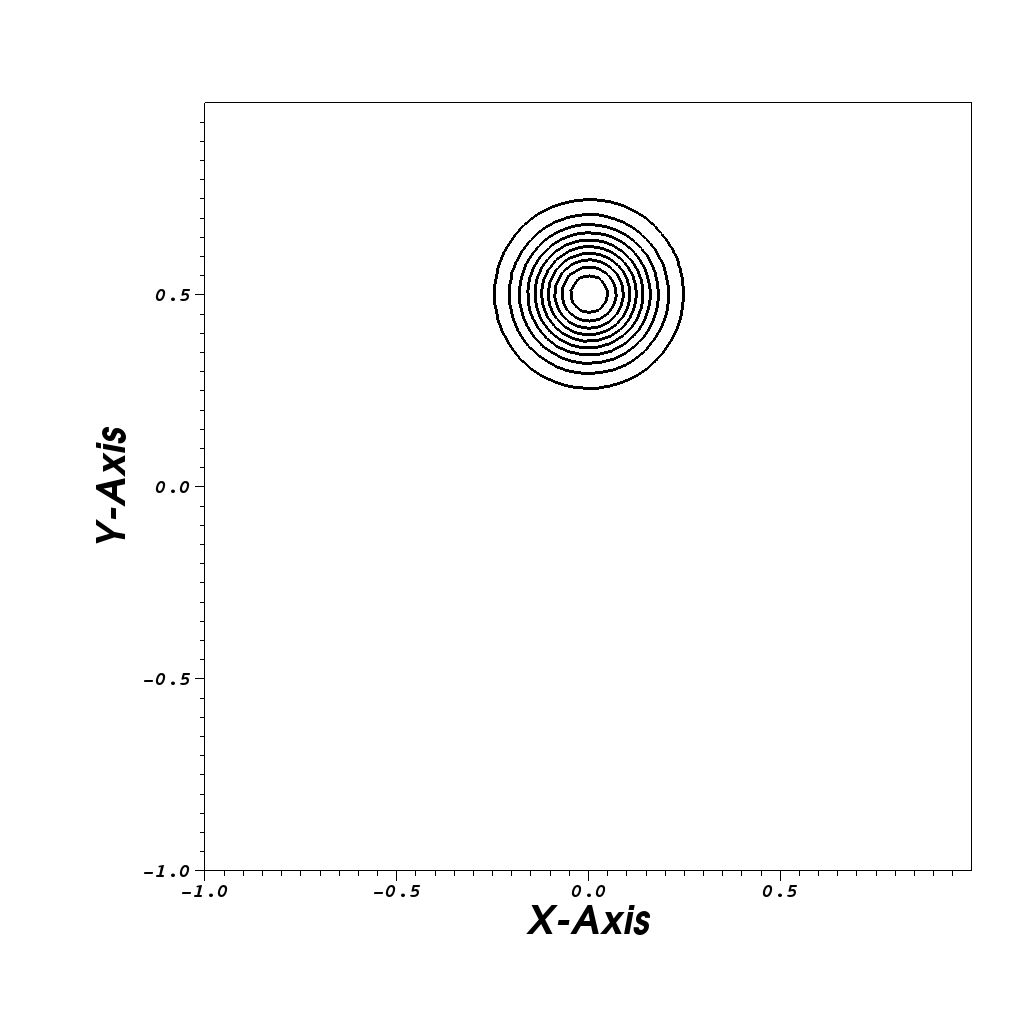}
    \caption{\label{fig:initial} Initial data}
  \end{subfigure}%
   \begin{subfigure}[b]{0.4\textwidth}
    \includegraphics[width=\textwidth]{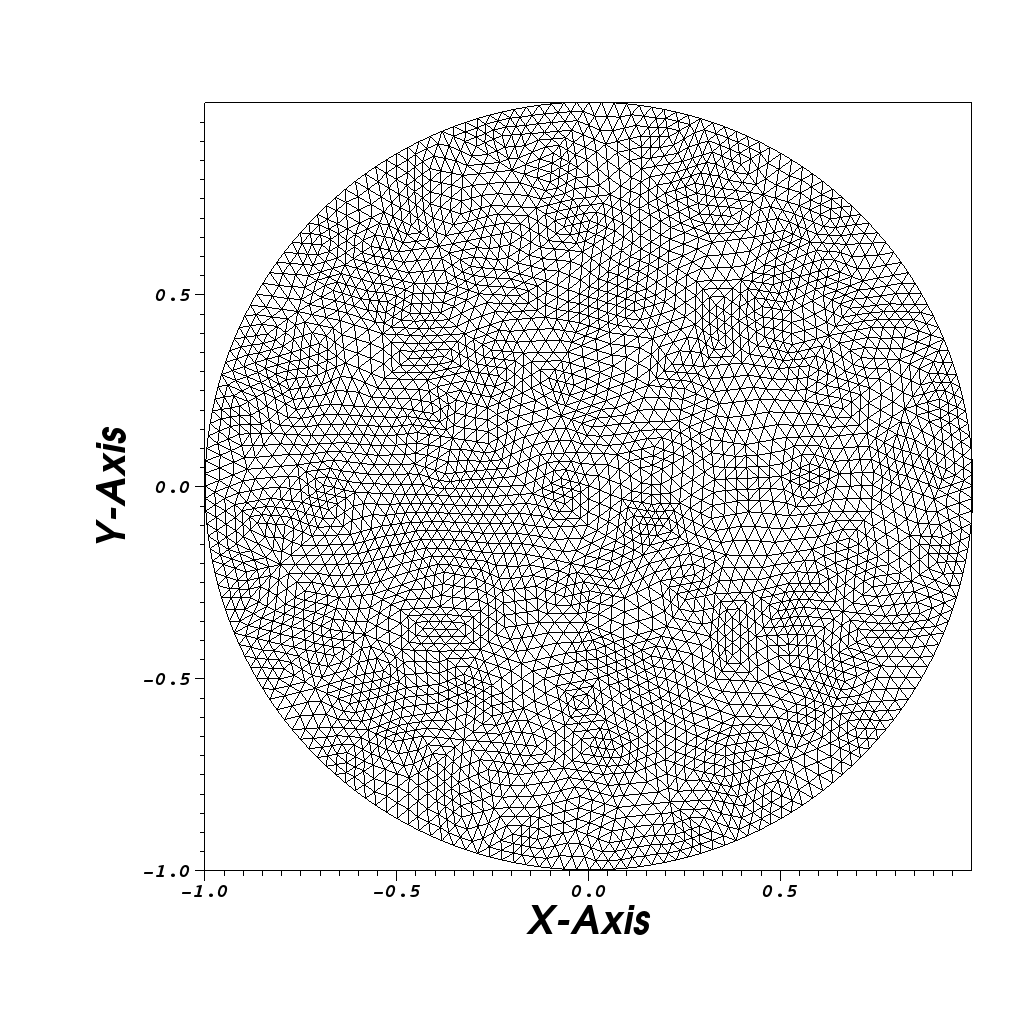}
    \caption{\label{fig:mesh_rot} Mesh}
  \end{subfigure}%
     \caption{4-th order scheme in space and time}
 \end{figure}
We apply an unstructured triangular mesh with $932$  triangles.
In the second calculation $5382$ triangles are used.
The time integration is again done via a SSPRK54 scheme 
with CFL=0.2. 
A pure continuous Galerkin scheme with Bernstein polynomials is used 
for the space discretization. \rev{Due to the variable coefficient problem, we apply the splitting technique as described in \cite{offner2019error,nordstrom2006conservative} and see \ref{re:exactness}. The volume term is split into a symmetric and anti-symmetric part, see for details the mentioned literature. }
The boundary operator is estimated via the approach presented in \ref{subsec:extension}.
In figure \eqref{fig:two}, 
the results are presented after two rotations of the bump. 
Using $932$ triangles, we obtain a  maximum value of $0.993$ and a minimum value 
fo $-0.012$.  Increasing the number of triangles, the maximum value after two rotations is $9.997$ where the 
minimum value is $-0.001$.
This test again verifies that our scheme remains stable only through our boundary procedure.
 \begin{figure}[!htp]
 \centering 
   \begin{subfigure}[b]{0.4\textwidth}
    \includegraphics[width=\textwidth]{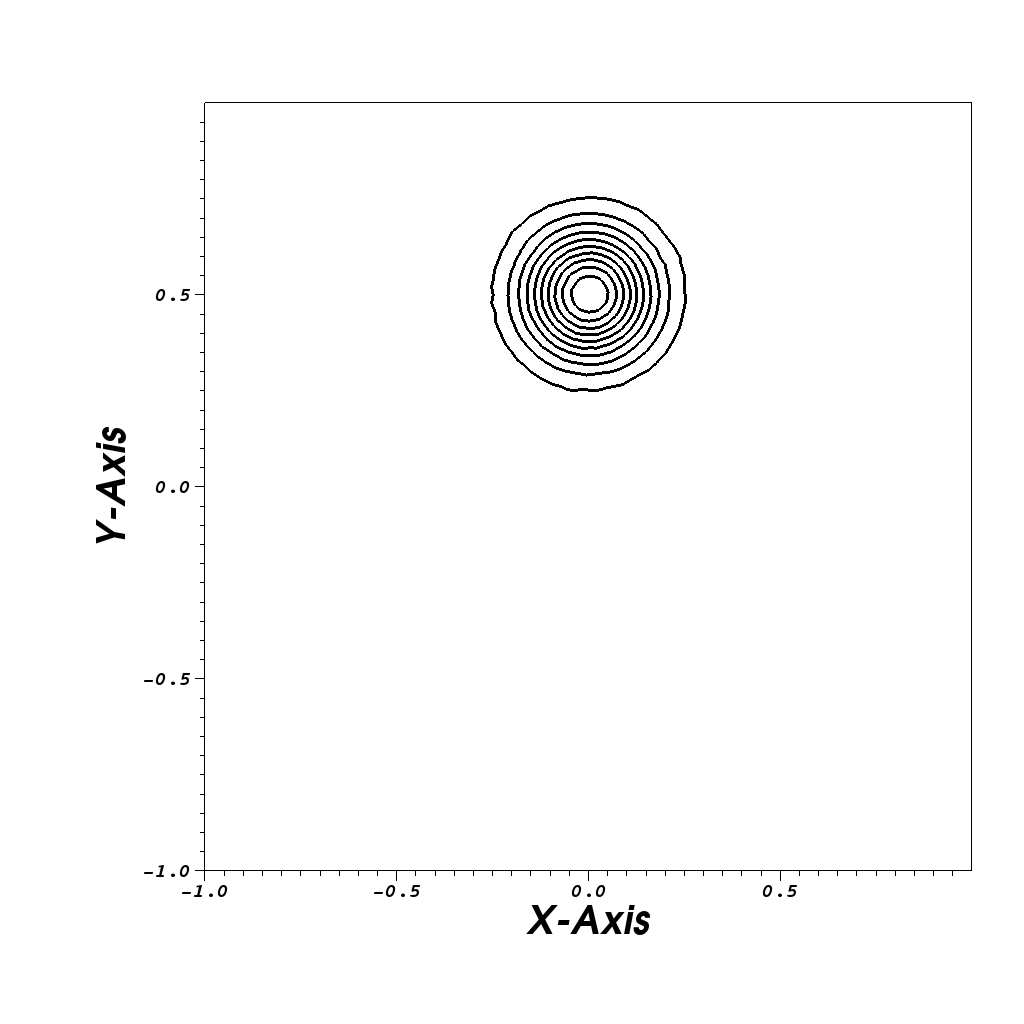}
    \caption{\label{fig:1triangle} $932$  triangles, $\max/\min = 0.993/-0.012$}
  \end{subfigure}%
   \begin{subfigure}[b]{0.4\textwidth}
    \includegraphics[width=\textwidth]{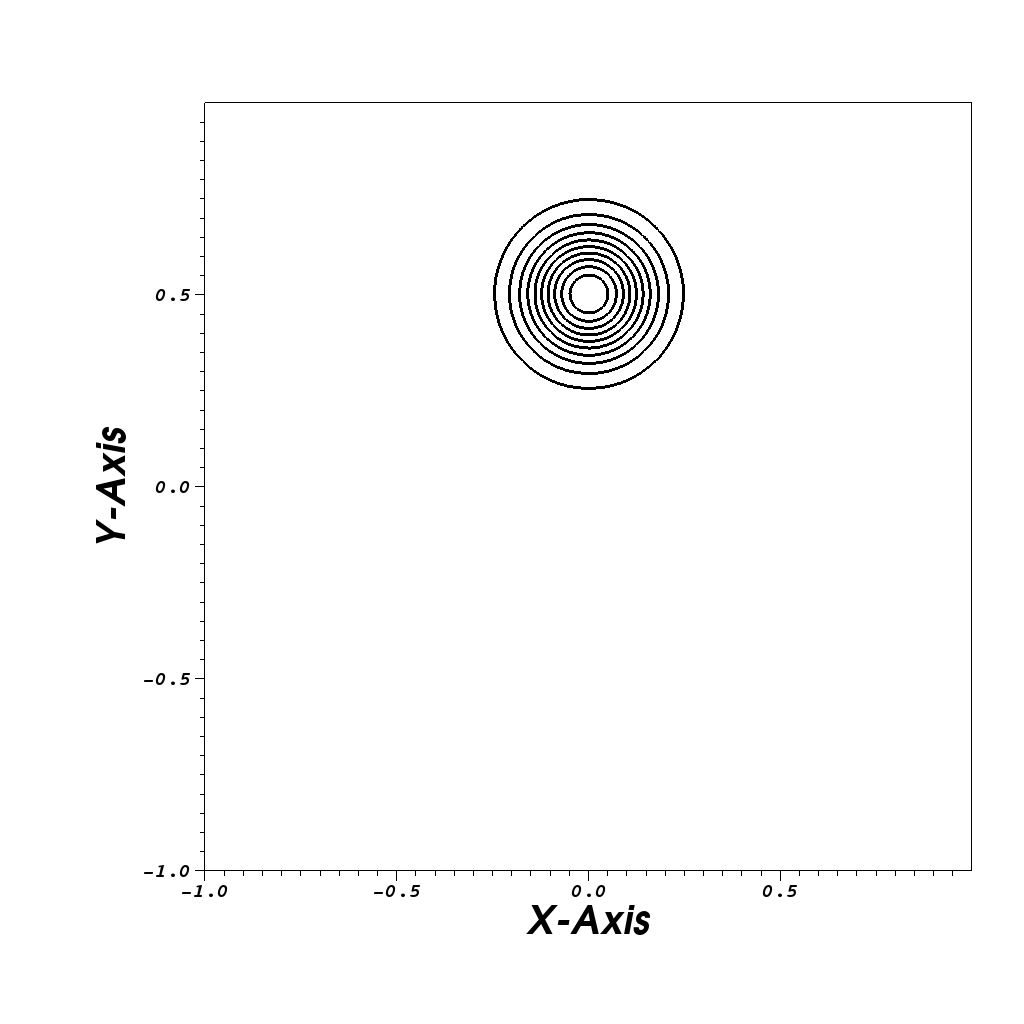}
    \caption{\label{fig:2triangle}  $5382$ triangles,  $\max/\min = 0.997/-0.001$ }
  \end{subfigure}%
     \caption{\label{fig:two} 4-th order scheme in space and time}
 \end{figure}

We compute this problem up to ten rotations for different orders. 
We observe that all of our calculation remain stable both using Lagrange or
Bernstein polynomials as can be seen for example in figure \ref{fig:10}.
In the captions, we mentioned  the respective maximum and minimum values and applied 
$10$ contour lines to to divide the different value regions. Especially, in figure \ref{fig:1:1triangle} 
one can imagine some stability issues. However, this is not the case. Here, the calculations demonstrated some numerical inaccuracies, but the calculation remains stable as can be read of the absolute maximum and minimum values. We recognize also that compared to the others the hight of the bump is decreasing. This 
behavior suggest a certain amount of  artificial dissipation. We obtain the most accurate results using the fourth order
scheme which is not  surprising.

  \begin{figure}[!htp]
 \centering 
     \begin{subfigure}[b]{0.35\textwidth}
    \includegraphics[width=\textwidth]{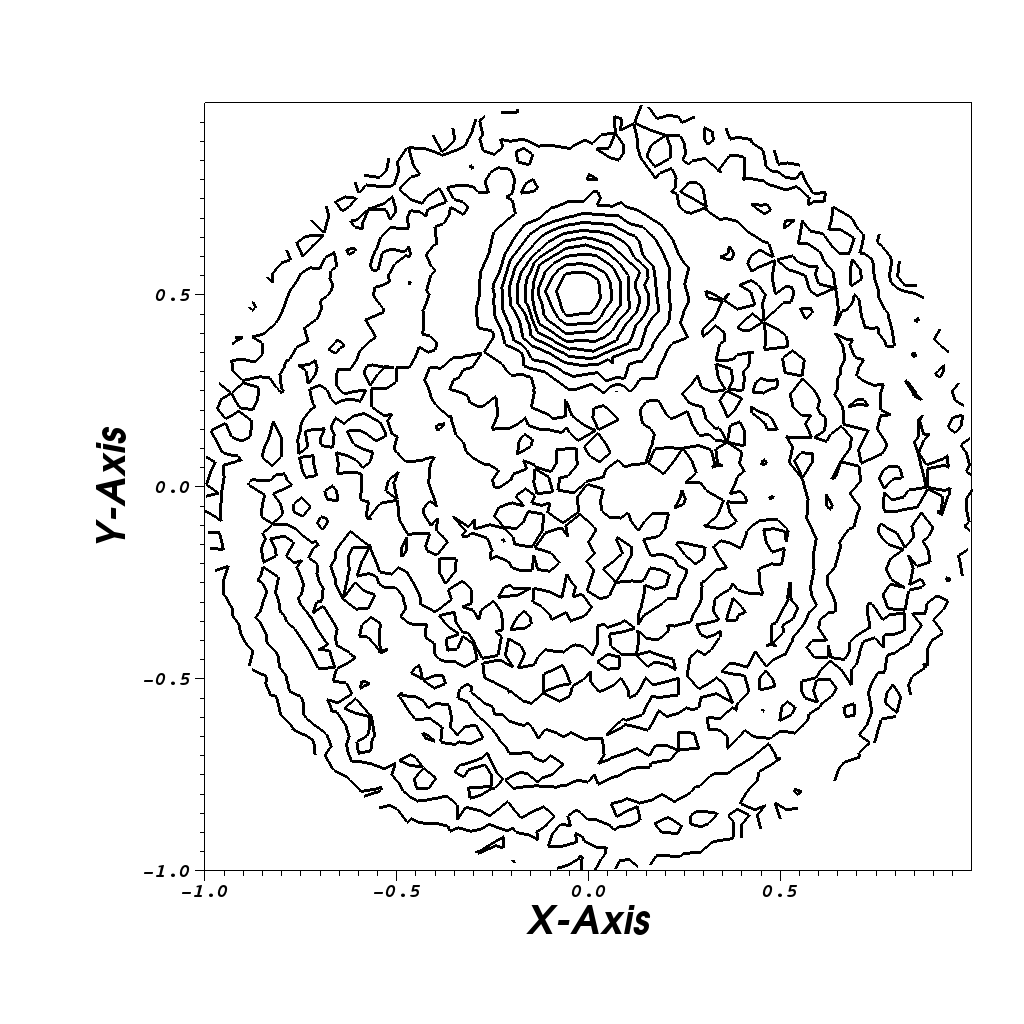}
    \caption{\label{fig:1:1triangle} $5382$  triangles, P1/B1,  \\ $\max/\min = 0.891/-0.057$ }
  \end{subfigure}%
    \begin{subfigure}[b]{0.35\textwidth}
    \includegraphics[width=\textwidth]{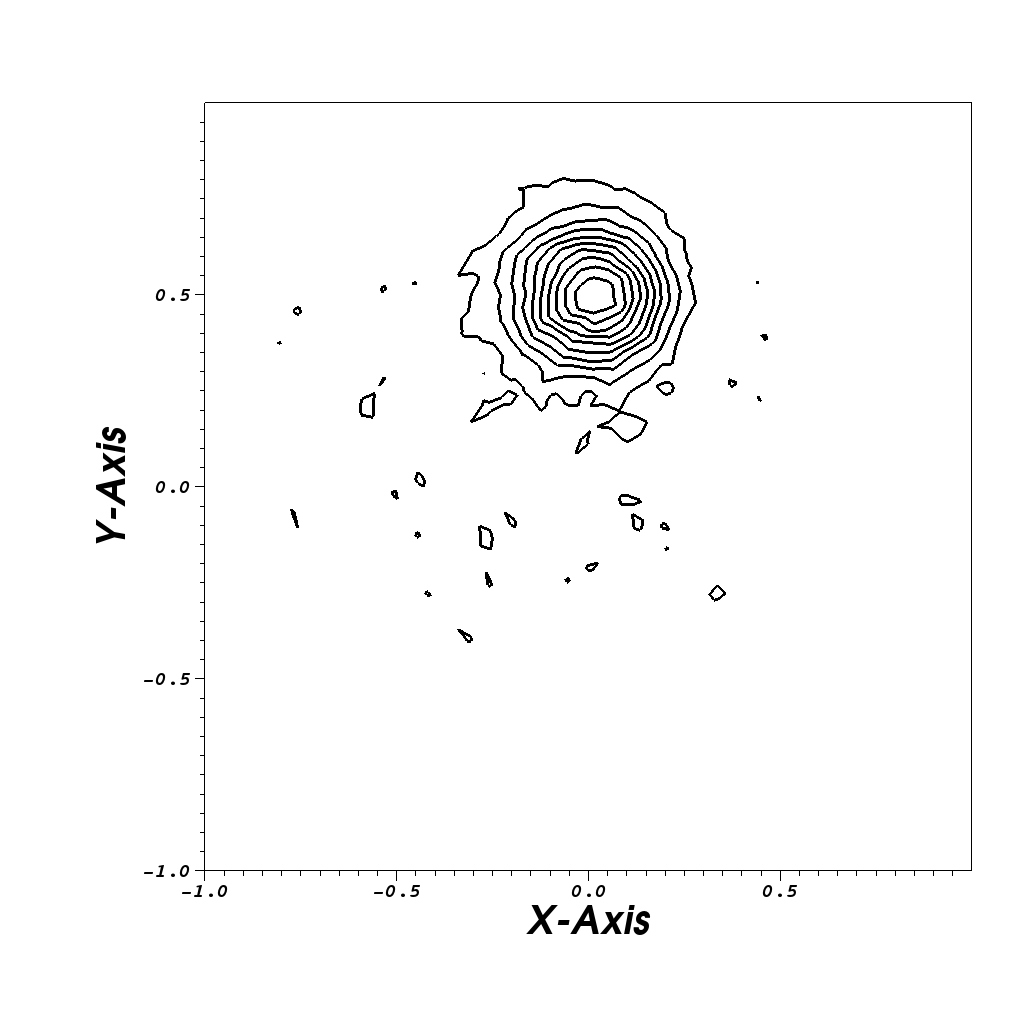}
    \caption{\label{fig:1:2triangle} $932$  triangles, P2,  \\$\max/\min = 0.936/-0.054$ }
  \end{subfigure}%
   \begin{subfigure}[b]{0.35\textwidth}
    \includegraphics[width=\textwidth]{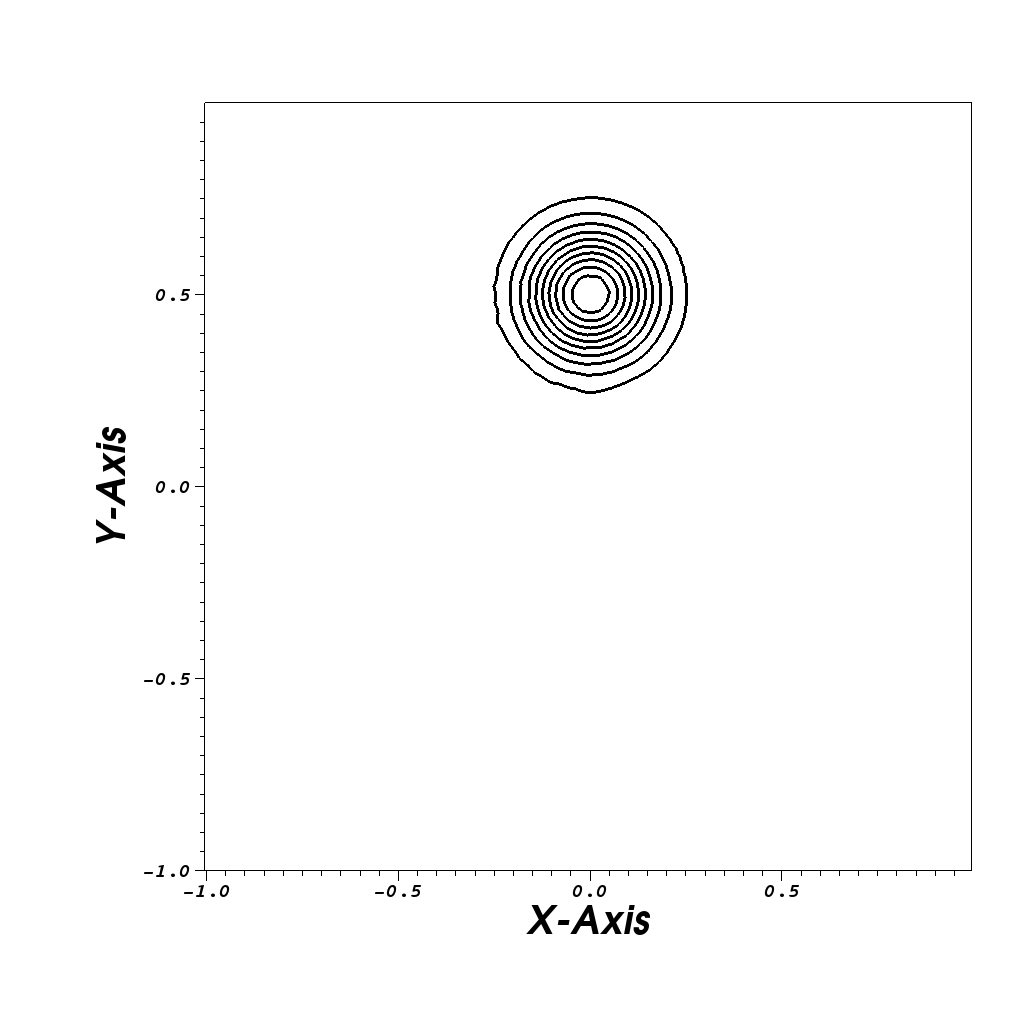}
    \caption{\label{fig:1:3triangle} $932$  triangles, B3,  \\ $\max/\min = 0.994/-0.009$ }
  \end{subfigure}%
     \caption{\label{fig:10} 2,3,4-th order scheme  in space and time}
 \end{figure}
 
Finally,  we analyze the error behavior and calculate the order. 
%In the first step, Figure \ref{error_short} shows the error behaviors
%for second-fourth order in space and in time after $t=0.01$ with CFL=0.2.
% This shows that
%figure \ref{error_short} represents mainly the space discretisation, i.e. 
%the pure continuous Galerkin scheme. Here, we see that 
%the space discretization has indeed the desired accuracy order. 
%\begin{figure}[!htp]
% \centering 
%     \includegraphics[width=0.455\textwidth]{}
%    \includegraphics[width=0.52\textwidth]{}
%     \caption{\label{error_short} $t=0.01$, $L_1$-error and $L_2$-error}
% \end{figure}
We use again the SSPRK schemes of the same order. 
%We determine again the errors after one rotation, see Figure  \ref{error_short:2}.
%To include the time integration effects, we determine again  
%the errors, but now  after one rotation, see figure \ref{error_short:2}.

\begin{figure}[!htp]
 \centering 
     \includegraphics[width=0.45\textwidth]{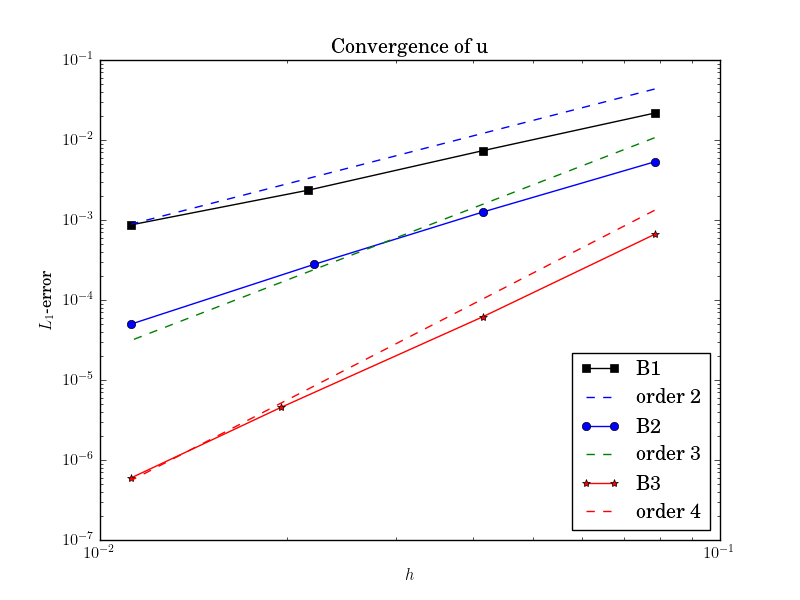}
    \includegraphics[width=0.45\textwidth]{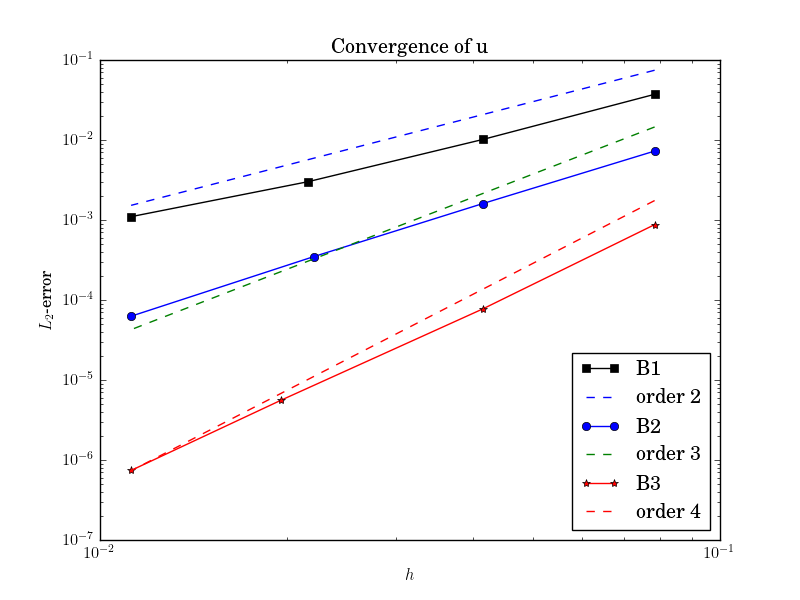}
     \caption{\label{error_short:2} $t=1$, $L_1$-error and $L_2$-error}
 \end{figure}
We recognize a slight decrease of the order similar to the observation made in \cite{zbMATH03835647} which 
was up to our knowledge the first ones who described it.
The investigation of the decreased order of accuracy is not the main focus of this paper, where we 
focus on the stability properties of the pure continuous Galerkin scheme.

\subsection{One-Dimensional Wave Equation}
As a first example for systems with non-homogeneous boundary condition,
we consider the linear wave equation 
\begin{equation*}
 \dfrac{\partial^2 u}{\partial t^2}-\dfrac{\partial^2 u}{\partial t^2} = 0 \quad t>0, x\in (0,1),
\end{equation*}
By applying a change of variables $ \tilde{u}:= \partial_x u$
and $\tilde{v}=-\partial_t u$, the wave equation 
can be rewritten as a first order hyperbolic system of conservation laws
\begin{equation}\label{eq:wave_system}
\begin{aligned}
\dpar{\tilde{u}}{t}+ \dpar{\tilde{v}}{x}&=0,\\
\dpar{ \tilde{v}}{t}+\dpar{\tilde{u}}{x}&=0,
\end{aligned}
\end{equation}
which is sometimes referred to as the one-dimensional acoustic problem. 
%\begin{equation*} 
% u(0,t)=u(1,t)=\sin t.
%\end{equation*}
%By transformation of variables, we obtain a linear system \eqref{eq:1} of first order
%with coefficient matrix 
% $$A=\begin{pmatrix}0 &1 \\1&0\end{pmatrix}.$$
% It is 
 The system \eqref{eq:wave_system} can also be expressed 
through  the linear system 
\begin{equation}\label{eq:wave_system_2}
 \dpar{U}{t}+ A \dpar{U}{x}=0 \text{ with } U=\begin{pmatrix}  \tilde{u}\\
\tilde{v}
\end{pmatrix}
 \text{and coefficient matrix } A= \begin{pmatrix} 
0 & 1\\
1 & 0
\end{pmatrix}.
\end{equation}
which we consider in the following. 
 We assume the  sinusoidal boundary conditions:
 \begin{align*}
 &x=0: & \dfrac{1}{\sqrt{2}}\begin{pmatrix} 1 & 1 \\ 1 &-1 \end{pmatrix}\begin{pmatrix} \tilde{u}\\ \tilde{v} 
 \end{pmatrix}  =\begin{pmatrix} \sin t \\ 0\end{pmatrix} \\
  &x=1: & \dfrac{1}{\sqrt{2}}\begin{pmatrix} 1 & 1 \\ 1 &-1 \end{pmatrix}\begin{pmatrix} \tilde{u}\\ \tilde{v} 
 \end{pmatrix}  =\begin{pmatrix} 0 \\ \sin t \end{pmatrix} \\
 \end{align*}
To determine the boundary operators, we calculate the eigenvalues and the eigenvectors  of 
$A$ following the ideas of Subsection \ref{subsec:extension}. We obtain the eigenvalues $\lambda_{1/2}= \pm1$ and 
$$X=\dfrac{1}{\sqrt{2}}\begin{pmatrix} 1 & 1 \\ 1 &-1 \end{pmatrix}=X^T$$ 
where the rows are the eigenvectors. It is $X^TX=\IdxM$. 
We assume that (the subscripts "0" and "1" refers to the end points of $[0,1]$)
$$
\Pi_0\big ( M_0(U)-g_0\big )=\begin{pmatrix} -R_0 & 1\\0&0\end{pmatrix}X^TU-\begin{pmatrix}\sin t \\ 0\end{pmatrix}
\qquad 
\Pi_1\big ( M_1(U)-g_1\big )=\begin{pmatrix} 0&0\\1&-R_1\end{pmatrix}X^TU-\begin{pmatrix}0 \\ \sin t \end{pmatrix}$$
with \remi{$|R_0|,\;|R_1|<1$}.
Described in \cite{nordstrom2017roadmap}, the problem is well posed in $L^2([0,1])$.
For the time integration, we apply the SSPRK method of third order given in \cite{gottlieb2011strong}
and the space discretization is done via a pure Galerkin scheme of third order
using Lagrange polynomials. The CFL condition is set to 0.4. 
\remi{For $100$ cells and a regular mesh, we have the results displayed in figure
\ref{fig:2}. We tested it up to $t=50$ without any stability problems. The Galerkin scheme gives us  numerical approximations in a way as expected and determined from the theory.}
\begin{figure}[h]
\centering
  \begin{subfigure}[b]{0.4\textwidth}
    \includegraphics[width=\textwidth]{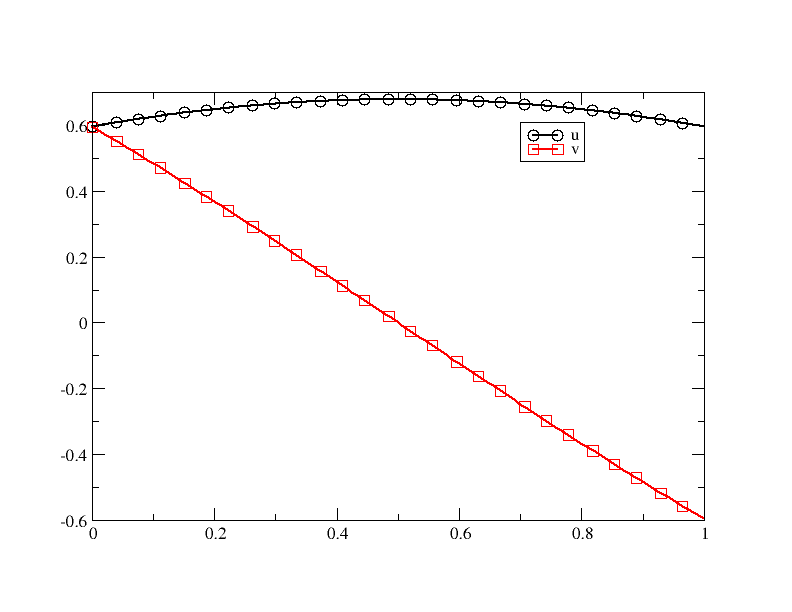}
    \caption{$t=1$}
  \end{subfigure}%
    \begin{subfigure}[b]{0.4\textwidth}
    \includegraphics[width=\textwidth]{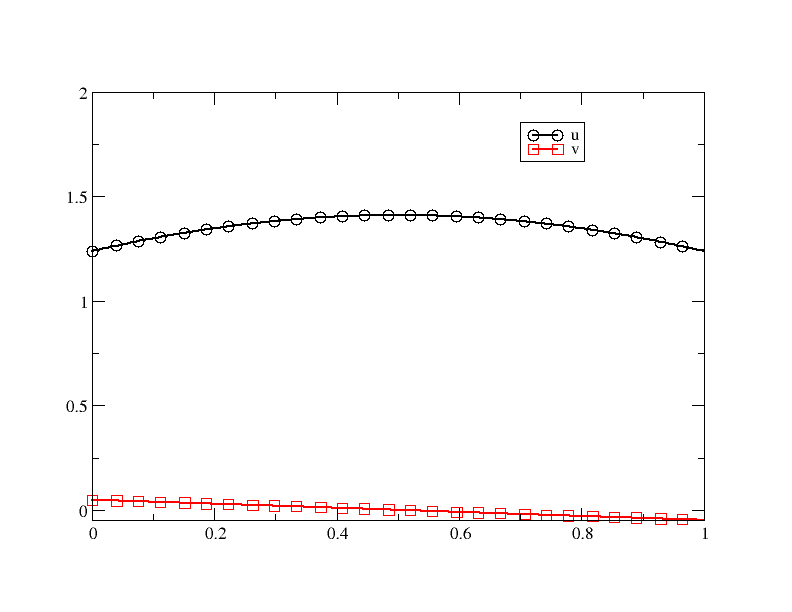}
    \caption{\label{fig:result_1} $t=2$}
  \end{subfigure}\\
    \begin{subfigure}[b]{0.4\textwidth}
    \includegraphics[width=\textwidth]{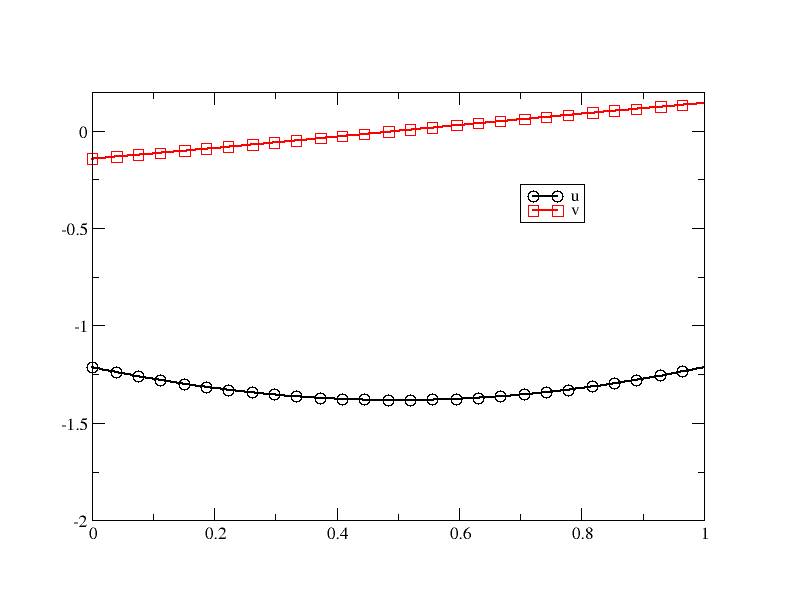}
    \caption{$t=5$}
  \end{subfigure}%
    \begin{subfigure}[b]{0.4\textwidth}
    \includegraphics[width=\textwidth]{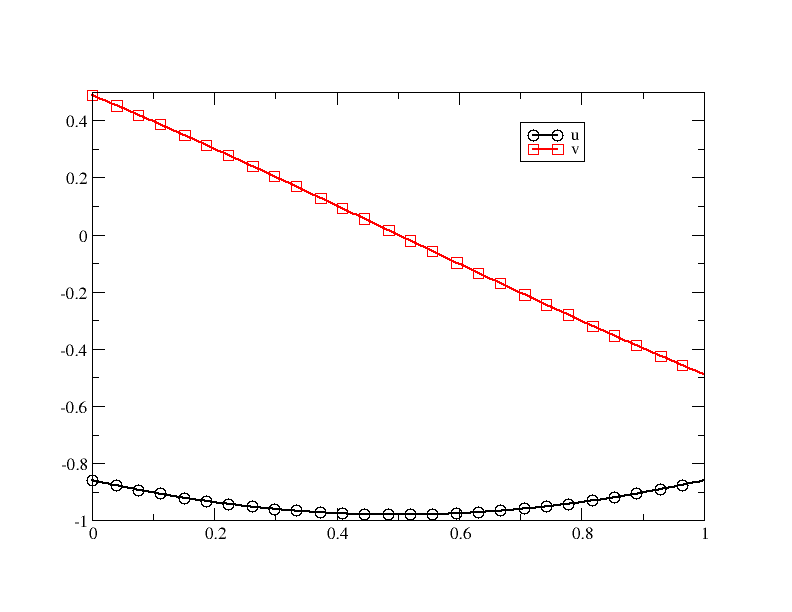}
    \caption{\label{fig:result_22} $t=50$}
  \end{subfigure}
%\subfigure[]{\includegraphics[width=0.45\textwidth]{Figs/t=1.png}} 
% \subfigure[$t=2$]{\includegraphics[width=0.45\textwidth]{Figs/t=2.png}} 
% \subfigure[$t=3$]{\includegraphics[width=0.45\textwidth]{Figs/t=3.png}} 
% \subfigure[$t=4$]{\includegraphics[width=0.45\textwidth]{Figs/}} 
% \subfigure[$t=~5$]{\includegraphics[width=0.45\textwidth]{Figs/t=5.png}} 
% \subfigure[$t=50$]{\includegraphics[width=0.45\textwidth]{Figs/t=50.png}} 
\caption{\label{fig:2} Results for the wave problem \eqref{eq:wave_system} and $t=1,2,5,50$, 3rd order scheme in space and time. We have 100 cells (199 degrees of freedom), CFL=0.1}
\end{figure}
Under the same terms and conditions, we ran the test again now with a random mesh. Figure
\ref{fig:3} demonstrates the results at $t=2$ with a zoom in in figure \ref{fig:zoom}
to highlight the mesh points. Indeed, no visible difference can be seen between figure \ref{fig:result_1}
and \ref{fig:result_2}.
 \begin{figure}[h]
 \centering
\begin{subfigure}[b]{0.4\textwidth}
    \includegraphics[width=\textwidth]{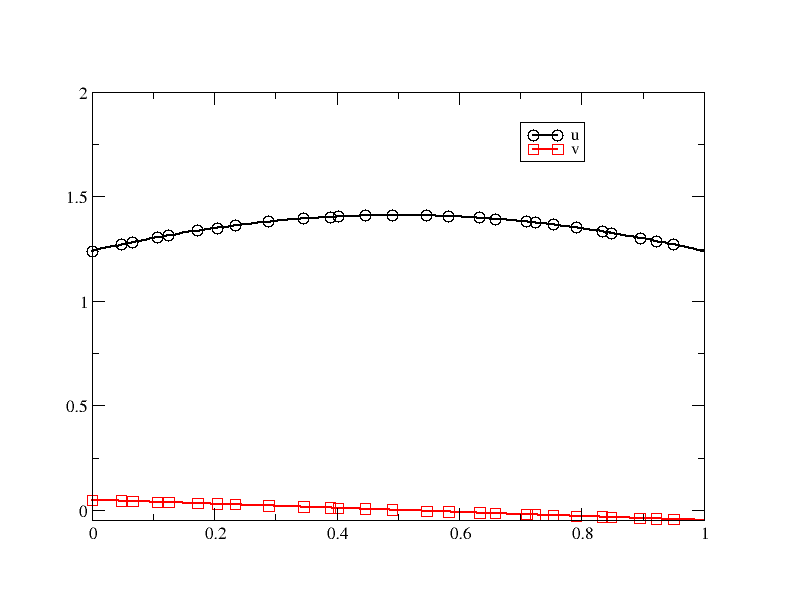}
    \caption{\label{fig:result_2} Whole mesh}
  \end{subfigure}%
    \begin{subfigure}[b]{0.4\textwidth}
    \includegraphics[width=\textwidth]{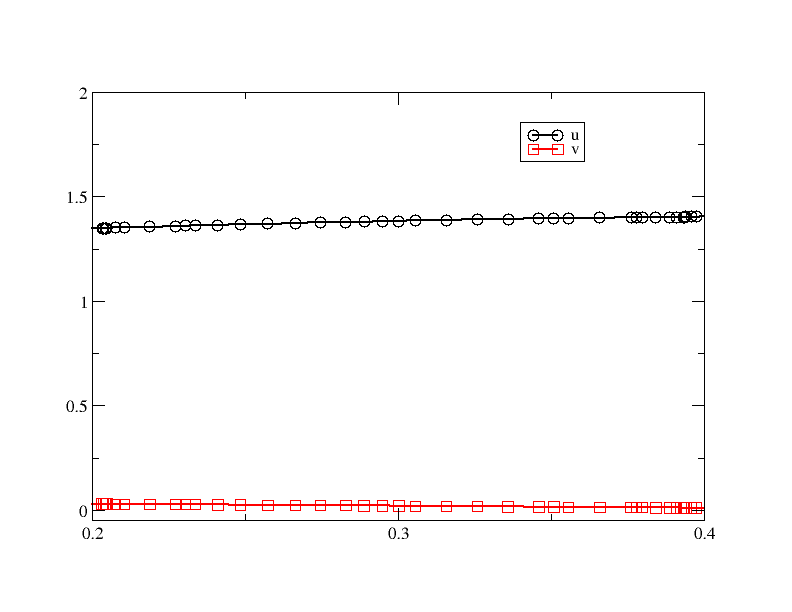}
    \caption{\label{fig:zoom} zoom}
    \end{subfigure}
% \subfigure[$t=2$]{\includegraphics[width=0.45\textwidth]{Figs/t=2random.png}} 
% \subfigure[$t=2$, zoom]{\includegraphics[width=0.45\textwidth]{Figs/t=2random_zoom.png}}  
 \caption{\label{fig:3} Results for the problem and $t=2$, irregular mesh, 3rd order scheme in space and time. We have 100 cells (199 degrees of freedom), CFL=0.1}
 \end{figure}

\subsection{ R13 sub-model for Heat Conduction} \label{subsec_R13}

In our last simulation, we consider the steady R13 sub-model for heat conduction investigated 
 in \cite{torrilhon2016modeling, rana2013robust}. 
It reads 
\begin{equation}
\label{eq:1_R13}
\begin{split}
\text{div } s &=f,\\
\text{ grad }\theta +\text{ div }\mathbf{R}&=-\frac{s}{\tau},\\
\frac{1}{2}\big (\text{ grad} s +(\text{grad }s)^T)&=-\frac{\mathbf{R}}{\tau},
\end{split}
\end{equation}
 in $\Omega=\{(x,y)|\frac{1}{2}\leq \sqrt{x^2+y^2}\leq 1\}\in \R^2$. 
The outer boundary will be denoted by $\Gamma_1$ and the inner circle is $\Gamma_0$.
The process includes a scalar temperature  $\theta\in \R$, a vector values heat flux $s\in \R^2$,
and  a symmetric tensorial variable $\mathbf{R}$ represented by a symmetric $2\times 2$ matrix.
$\tau$ is a constant relaxation time. 

We set
$$s=(s_x,s_y), \mathbf{R}=\begin{pmatrix} R_{xx} & R_{xy}\\R_{xy} & R_{yy}\end{pmatrix}
$$
If $U=(\theta, s, \mathbf{R})$ with $\mathbf{R}=(R_{xx},R_{xy},R_{yy})$
the system \eqref{eq:1_R13} can be rewritten as:
$$A\dpar{U}{x}+B \dpar{U}{y}=0.$$
In the following applications, we will consider the unsteady version of \eqref{eq:1_R13}
$$\dpar{U}{t}+A\dpar{U}{x}+B \dpar{U}{y}=0$$ with boundary conditions that will be detailed in the next part of this section.
The aim is to look for a steady solution of this system, and hence to develop a time marching approach.
With $\alpha \in \R$, the matrix $\cos\alpha A+\sin\alpha B$ reads
\begin{equation}
\label{eq:2_2}
A_\alpha=\begin{pmatrix}
0&\cos\alpha & \sin\alpha& 0 & 0 & 0 &\\
\cos\alpha & 0 & 0 & \cos\alpha & \sin\alpha & 0\\
\sin\alpha & 0 & 0 & 0 & \cos\alpha & \sin\alpha \\
0 & \cos\alpha & 0 &0 & 0 &0\\
0& \frac{\sin\alpha}{2} &\frac{\cos\alpha}{2}& 0& 0& 0\\
0&0&\sin\alpha& 0&0&0
\end{pmatrix}
\end{equation}
and we notice that the system \eqref{eq:1_R13} is symmetrizable.
The symmetrizer is  $P=\diag \left(1,1,1,1,\frac{1}{2},1\right)$
and together, we obtain 
$$A_\alpha P=\begin{pmatrix}
0&\cos\alpha & \sin\alpha& 0 & 0 & 0 &\\
\cos\alpha & 0 & 0 & \cos\alpha & \frac{\sin\alpha}{2} & 0\\
\sin\alpha & 0 & 0 & 0 & \frac{\cos\alpha}{2} & \sin\alpha \\
0 & \cos\alpha & 0 &0 & 0 &0\\
0& \frac{{\sin\alpha}}{2} &\frac{{\cos\alpha}}{2}& 0& 0& 0\\
0&0&\sin\alpha& 0&0&0
\end{pmatrix}=B_\alpha.
$$
$B_\alpha$ is symmetric and to estimate the boundary operator, we need
the eigenvalues and eigenvectors of $A_\bn$. The eigenvectors are
\begin{equation*}
\begin{split}
R=
\begin{pmatrix}
1 & 1 & 0 & 0 & -1 &-\cos^2\alpha \\
\s \cos\alpha & -\s \cos\alpha & -\is \sin\alpha &\is \sin\alpha & 0&0\\
\s \sin\alpha & -\s \sin\alpha & \is \cos\alpha& -\is \cos\alpha & 0&0\\
\cos^2\alpha & \cos^2\alpha & \frac{ \sin(2\alpha)}{2} & -\frac{\sin(2\alpha)}{2}  & 1&\cos(2\alpha)\\
\frac{\sin(2\alpha)}{2} &\frac{\sin(2\alpha)}{2} & -\frac{\cos(2\alpha)}{2} & \frac{\cos(2\alpha)}{2} & 0& \frac{\sin(2\alpha)}{2}\\
\sin^2\alpha& \sin^2\alpha & \frac{\sin(2\alpha)}{2} & \frac{\sin(2\alpha)}{2}& 1 &0
\end{pmatrix}=\begin{pmatrix} R_1, R_2, R_3, R_4, R_5, R_6\end{pmatrix}
\end{split}
\end{equation*}
associated to the eigenvalues
$\lambda=(\s,-\s,\is,-\is,0,0)$.
Through $P$, we can calculate $P^{-1},\;P^{1/2}$ and $P^{-1/2}$ without problems. 
\begin{remark}
 Since the system is symmetrizable, the eigenvectors are orthogonal for the quadratic form associated
 to $P$, i.e. for eigenvectors $r_i\neq r_j$ hold $\langle Pr_i,r_j\rangle=0$,
 where $\langle \cdot, \cdot \rangle$ denotes the scalar product. 
\end{remark}

% Since the system is symetrizable we see that if $r_i$ and $r_j$ are different eigenvectors,
% we have $\langle Pr_i,r_j\rangle=0$.
% To recall this, if $x_1$ and $x_2$ are eigenvectors associated to different eigenvalues
% , we  set $y_i=P^{-1/2}x_i$, so, since
% $P^{-1/2}A_\alpha P^{1/2}= P^{\blue{-}1/2}B_\alpha P^{\blue{-}1/2}$  is symmetric, so
% $$\delta_{ij}=\langle y_i,y_j\rangle =  \langle P^{-1/2} x_i,P^{-1/2} x_j\rangle=\langle P^{-1} x_i,x_j\rangle,$$
% this shows that the eigenvectors are orthogonal for the quadratic form associated to $P$.

\subsubsection*{The boundary conditions}
The physical boundary condition follows from Maxwell's kinetic accommodation model.
%On $\Gamma_0$,
We have
$$
\begin{pmatrix}
-\alpha \theta+ s_xn_x+s_yn_y-\alpha R_{nn}\\
\beta t_x s_x+\beta t_ys_y+t_xn_xR_{xx}+(t_xn_y+t_yn_x)R_{xy}+t_yn_y R_{yy}
\end{pmatrix}=\rev{L_\bn} U, \qquad U=\begin{pmatrix}
\theta \\ s_x \\s_y \\ R_{xx}\\R_{xy}\\R_{yy} \end{pmatrix}
$$
with normal components $(n_x,n_y)=(\cos\gamma, \sin \gamma)$ and tangential components 
$(t_x,t_y)= (-\sin \gamma,\; \cos \gamma ) $ where 
$\gamma$ is the angle between the $x$-xis and the outward unit normal on $\partial \Omega$.
The accommodation coefficients are given by $\alpha$ and $\beta$.
We have further 
\rev{$R_{\bn\bn}= R_{xx}\cos^2 \gamma+R_{yy} \sin^2 (\gamma) +2R_{xy}\cos( \gamma) \sin(\gamma) $}
and together this gives
$$\rev{L_\bn}=\begin{pmatrix}
-\alpha& \cos \gamma & \sin \gamma & -\alpha \cos^2\gamma & -2 \alpha \cos\gamma \sin \gamma & -\alpha \sin^2\gamma \\
0 & -\beta\sin\gamma & \beta \cos\gamma & -\cos\gamma \sin \gamma & \cos(2 \gamma) & 
\sin \gamma \cos \gamma \end{pmatrix}.
$$
Thanks to this, the boundary conditions on $\Gamma_0$ on $\Gamma_1$ reads 
\begin{equation*}
 \rev{L_\bn} U-\begin{pmatrix} -\alpha \theta_0 \\ -u_x \sin\gamma +u_y\cos\gamma\end{pmatrix}=0 \text{ on } \Gamma_0,\hspace{1cm}
 \rev{L_\bn} U-\begin{pmatrix} -\alpha \theta_1 \\ 0 \end{pmatrix}=0 \text{ on } \Gamma_1,
\end{equation*}
where $\theta_0$ and $\theta_1$ are the prescribed temperatures on the cylinders (boundaries of $\Omega$)
and $u_x, u_y$ denote the prescribed slip velocity.
 To simplify notations, we introduce $G$ as
$$G_\bn(x)=\left \{ \begin{array}{ll}
\begin{pmatrix} -\alpha \theta_0 \\ -u_x \sin\gamma +u_y\cos\gamma\end{pmatrix} & \text{ if } x\in \Gamma_0,\\
\begin{pmatrix} -\alpha \theta_1 \\ 0 \end{pmatrix} & \text{ if } x\in \Gamma_1.
\end{array}\right .
$$
We follow the investigation in subsection \ref{subsec:extension}
and get to the energy balance \eqref{eq:energy_end}
\begin{equation*}
 \int_{\partial \Omega} \bigg ( \frac{1}{2} V^T (An_x+Bn_y) U -
V^T \revs{\Pi} \rev{L_\bn}U \bigg ) \partial \Omega>  - \int_{\partial \Omega} V^T\Pi G_\bn \partial \Omega.
\end{equation*}
In our practical implementation,
we look for $\Pi$ to get energy stability in the homogeneous case.
%With \eqref{eq:boundary} and $U=P^{1/2}V$,
\remi{Instead of working with the variable transformation $U=PV$, we select  $U=P^{1/2}V$ 
for  convenience  reasons later in the implementation.}
Then 
the condition reads:
\begin{equation}
 \frac{1}{2} V^T (An_x+Bn_y) U -V^T\revs{\Pi} \rev{L_\bn}U= V^T \bigg (   ( \frac{1}{2} A_\bn -\Pi \rev{L_\bn}) \bigg )P^{1/2}V >0.
\end{equation}

One way to achieve this is to assume that $\frac{1}{2} A_\bn-\Pi \rev{L_\bn}$ has the same eigenvectors as
$\frac{1}{2}A_\bn$ and that the eigenvalues are positive, i.e.
$\Pi \rev{L_\bn} -  \frac{1}{2} A_\bn^-$ and $\Pi \rev{L_\bn}$ and $\frac{1}{2} A_\bn^-$
have the same eigenvalues\footnote{Here, we denote again by $-$ the negative eigenvalues.}.  
The idea behind this  is that $(\frac{1}{2}A_\bn-
\Pi \rev{L_\bn})P^{1/2}$ is
positive definite.
 However, this is well-defined under the condition that $\rev{L_\bn}P\rev{L_\bn}^T$ is invertible, and 
 we obtain
\begin{equation}
 \rev{L_\bn}P\rev{L_\bn}^T=\begin{pmatrix}
 1+2\alpha^2 & 0\\
 0& \frac{1}{2}+\beta^2
 \end{pmatrix}.
\end{equation}
This matrix is always invertible since its determinant  is always positive.  A solution to the problem is 
$\Pi \rev{L_\bn} P \rev{L_\bn}^T=R D L P^{1/2}\rev{L_\bn}^T$ with $D\leq \frac{1}{2}\Lambda^-$
so $\Pi= R DL P^{1/2} \rev{L_\bn}^T (\rev{L_\bn}P\rev{L_\bn}^T)^{-1}$ with $D\leq \frac{1}{2}\Lambda^-$
and using the transformation with $P^{1/2}$, we obtain:
 $$   \left(\frac{1}{2} A_\bn-\revs{\Pi} \rev{L_\bn}\right)P^{1/2}V=\lambda P^{1/2}V,$$
 i.e.
  $$\left( \frac{1}{2} A_\bn-\lambda I\right)P^{1/2}V=\Pi \rev{L_\bn}P^{1/2}V$$
  that is 
  \begin{equation}\label{eq:boundary_operator_R13}
  \left( \frac{1}{2} A_\bn-\lambda I\right)P\rev{L_\bn}^T(\rev{L_\bn}P\rev{L_\bn}^T)^{-1}V=\Pi V
 \end{equation}
Using $U$ instead of $V$ in the implementation, we have to multiply $\Pi$ with $P^{-1/2}$. 
\begin{remark}\label{5.3}
 Another way to determine  $\Pi$, we choose $\delta <0$ such that 
  $\Pi \rev{L_\bn}P^{1/2}-\frac{1}{2} A_\bn P^{1/2} =\delta \text{Id}$, and thus yields 
  \begin{equation*}
   \Pi=\big ( \delta P^{-1/2} +\frac{1}{2} A_\bn\big ) \rev{L_\bn}^T (\rev{L_\bn} \rev{L_\bn}^T)^{-1}.
  \end{equation*}
   However, this is well-defined under the condition that $\rev{L_\bn}\rev{L_\bn}^T$ is invertible.
  We obtain
\begin{equation}
 \rev{L_\bn}\rev{L_\bn}^T=\begin{pmatrix} \frac{1}{4} (4+9\alpha^2-\alpha^2\cos 4\gamma)& -\frac{1}{4}\alpha\sin 4\gamma \\
-\frac{1}{4}\alpha\sin 4\gamma & \frac{1}{4} (3+4\beta^2+\cos 4\gamma) \end{pmatrix}.
\end{equation}
The matrix is always invertible since elementary calculations yield to an estimation of the 
determinate which can be shown to be bigger than  0.5.
\end{remark}

\subsubsection*{Concrete example}
We have explained how we estimate the boundary operator
in the above equation \eqref{eq:boundary_operator_R13}.
In the test, we have set the accommodation coefficients $\alpha=3.0$ and $\beta = -0.5$.
The temperature at the boundaries are given by $\theta_0=0$ and $\theta_1=1$. 
Further, we have $u_x=1$
and $u_y=0$. The relaxation time is set to $0.15$.
Again, we use a continuous Galerkin scheme together with the above developed boundary procedure. 
The term $\delta$ is set to $-2$  and  the CFL number is $0.1$.
We  ran the problem up to steady state with a RK scheme.
In figure \ref{fig:R13} we show the mesh and also the result at steady state
using a coarse grid. The number of triangles is $400$. 
The problem is elliptic and  smooth which cannot be seen in this 
first picture since the mesh is too coarse.
 \begin{figure}[h]
 \centering
    \includegraphics[width=0.4\textwidth]{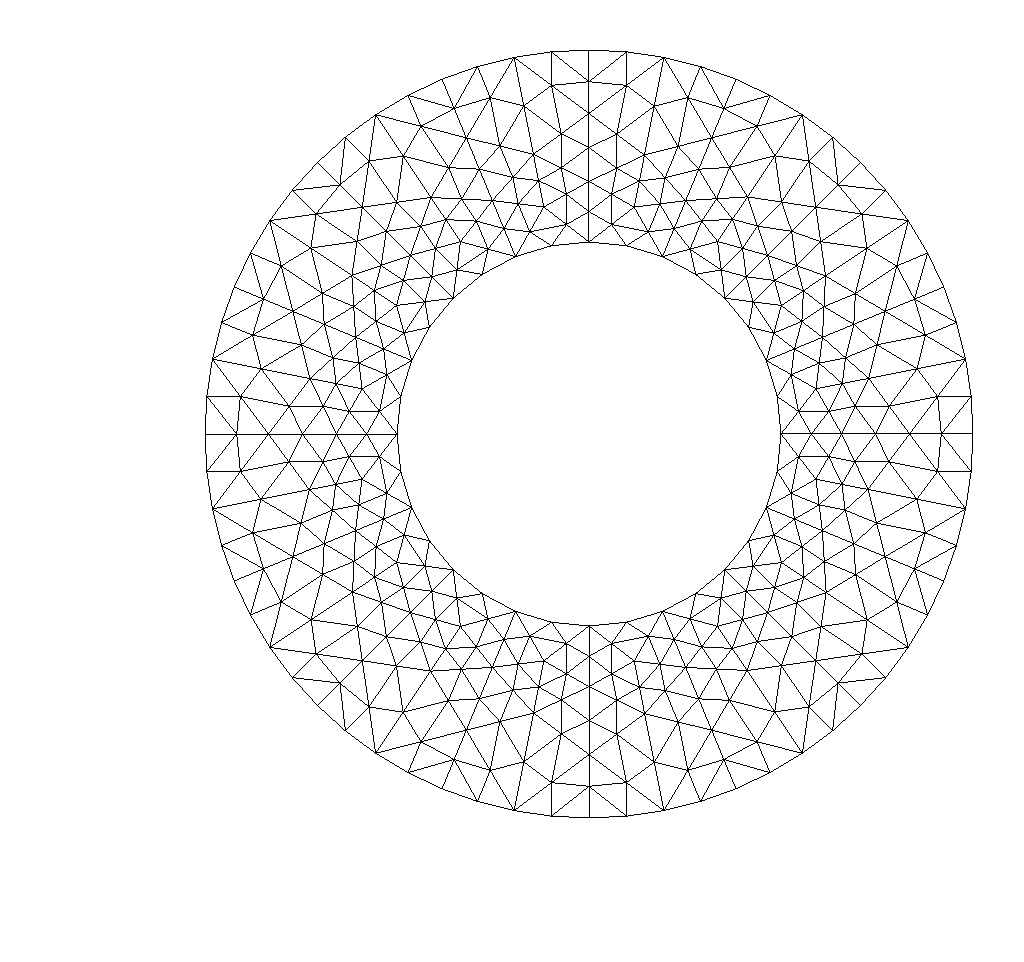}
    \includegraphics[width=0.4\textwidth]{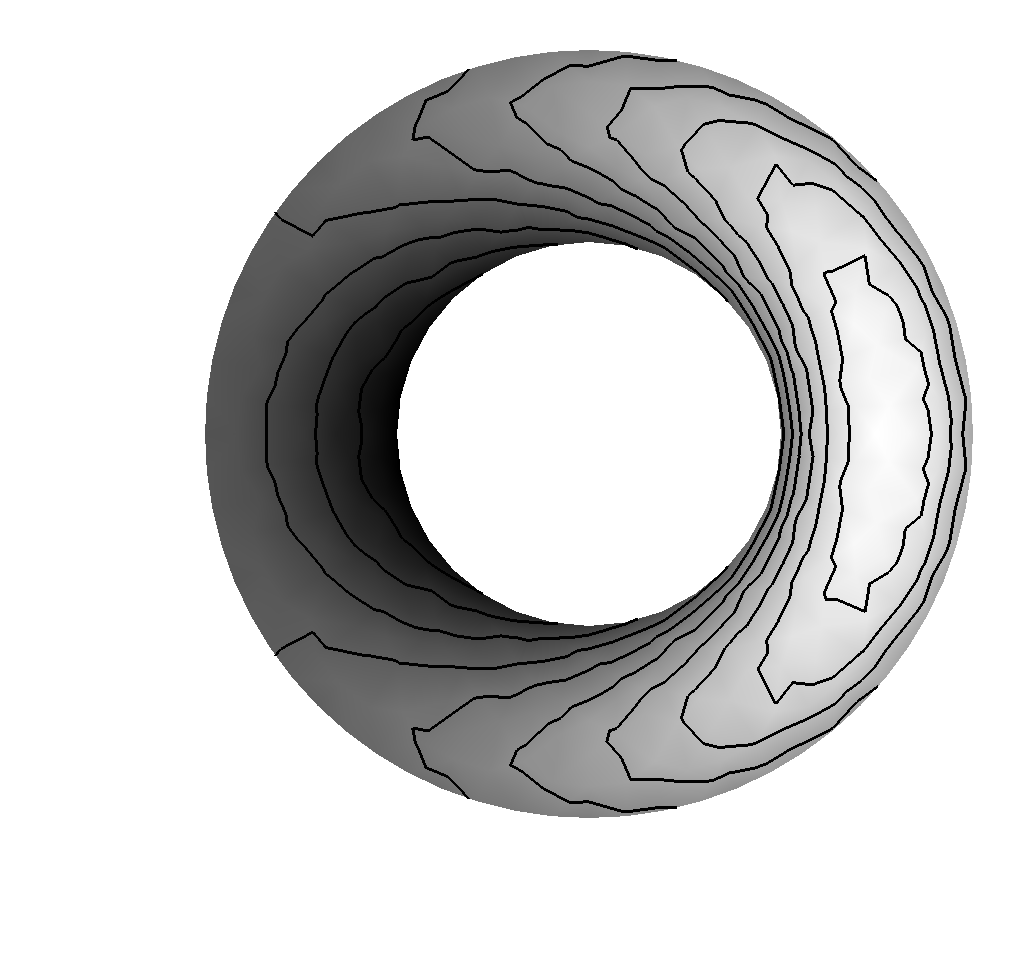}
% \subfigure[$t=2$]{\includegraphics[width=0.45\textwidth]{Figs/t=2random.png}} 
% \subfigure[$t=2$, zoom]{\includegraphics[width=0.45\textwidth]{Figs/t=2random_zoom.png}}  
 \caption{\label{fig:R13} Mesh and  steady state ($t=10)$, 3rd order scheme}
 \end{figure}

In the second test, we increase the number of elements in the mesh. Now,
we are using 5824 elements and also Bernstein polynomials of second order.
The mesh and the result are presented in figure \ref{fig:R13_2}.
Here, we recognize the smooth behavior
and the scheme remains stable only through the above described boundary procedure.

 \begin{figure}[h]
 \centering
    \includegraphics[width=0.4\textwidth]{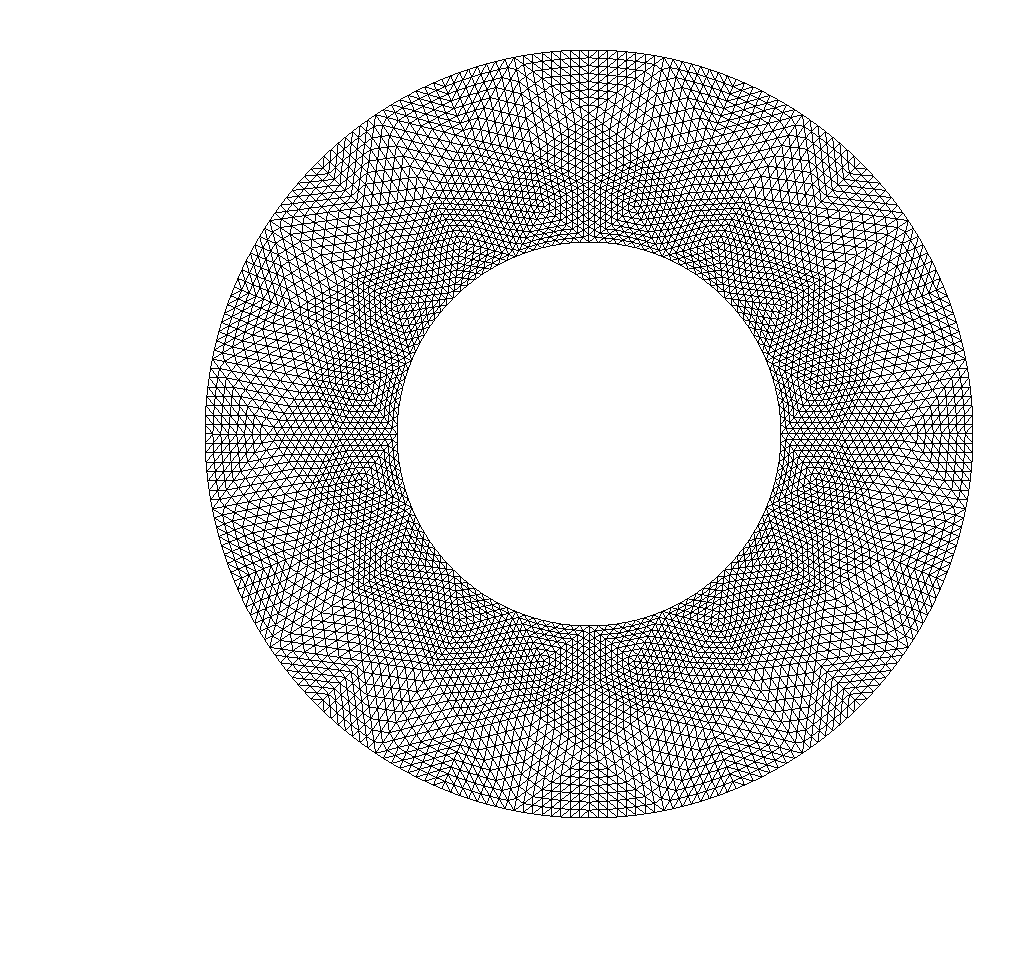}
    \includegraphics[width=0.4\textwidth]{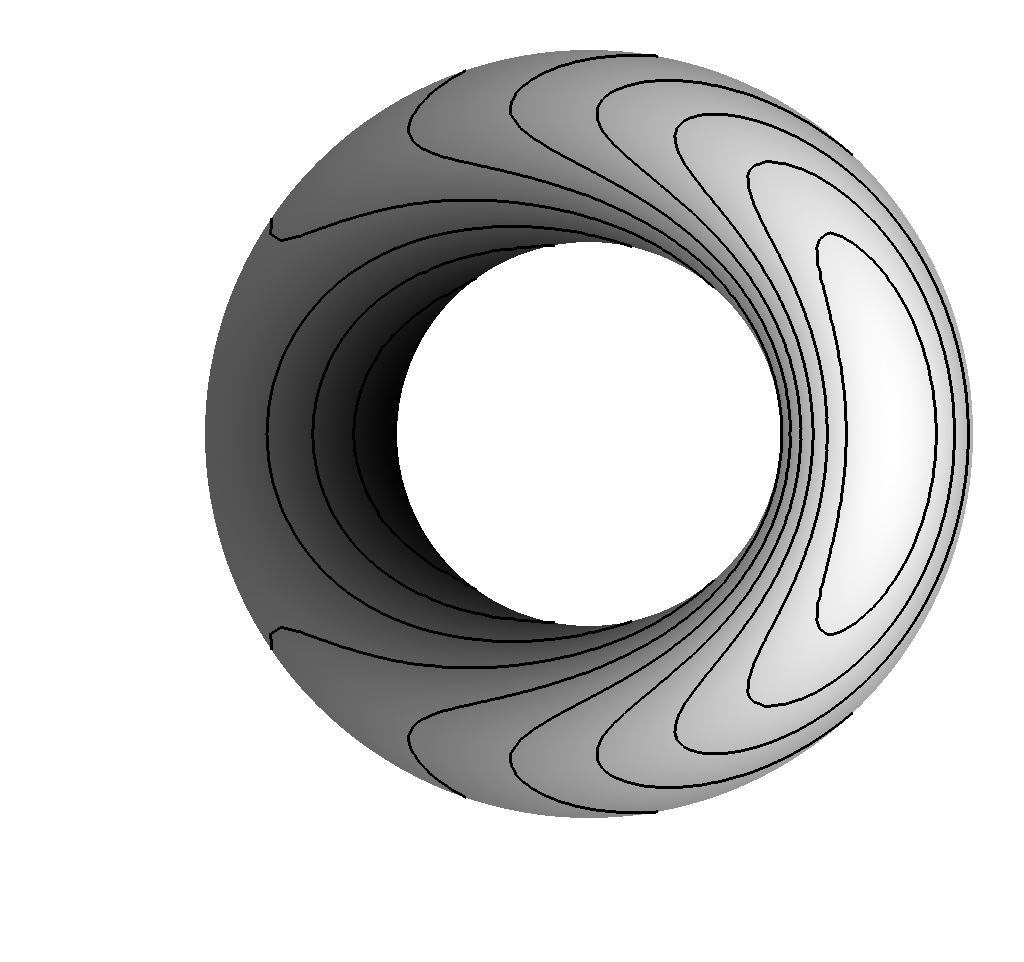}
% \subfigure[$t=2$]{\includegraphics[width=0.45\textwidth]{Figs/t=2random.png}} 
% \subfigure[$t=2$, zoom]{\includegraphics[width=0.45\textwidth]{Figs/t=2random_zoom.png}}  
 \caption{\label{fig:R13_2}  Mesh and  steady state ($t=10$), 3rd order scheme}
 \end{figure}
%\begin{remark}
%We also have run the test with the boundary operator developed in Remark \ref{5.3} but 
%this one is  more sensitive to numerical errors. The test was crashing using quite fine grid
%for the boundary operator form the remark, I do not do anything here.}

%\input{5_Summary}
\section{Conclusion and Outlook}

In this paper, we have demonstrated that a pure continuous Galerkin scheme 
is stable only through the applied boundary conditions. 
No further stabilizations terms are needed. This contradicts the \rev{
erroneous
perception in the hyperbolic community} about pure continuous Galerkin schemes to be unstable without additional stabilizations terms.
In our approach, the application of the SAT technique is essential 
where we impose the boundary conditions weakly. 
Using this approach, we derive a suitable boundary operator from the continuous 
setting to guarantee that the discrete energy inequality is always fulfilled. 
We present a recipe on how these operators can be constructed, in detail,
for scalar two-dimensional problems and two-dimensional systems. 
In numerical experiments, we verify  our theoretical analysis. 
Furthermore, in one test, we  demonstrate the importance of the used 
quadrature rule. The chosen quadrature rule in the numerical integration has to be the same as in the differential operators \revt{ such that the SBP property of our Galerkin scheme is valued}.
 If not,  the Galerkin 
scheme suffers from  stability issues. If stability can be reached only by enforcing the proper dissipative boundary conditions, there is no free meal:
this procedure is very sensitive to any numerical error, like roundoff error or quadrature error.
We think and hope that through our results the
common opinion about continuous Galerkin  and its application 
in CFD problems changes, modulo the restriction we have already described. This result is also interesting from a theoretical point
of view, and emphasizes the positive role that the boundary conditions may have.
In the companion paper \cite{nonlinear}, we  consider and analyze  non-linear (e.g. entropy) stability
of continuous Galerkin schemes. 
Here, the SAT approach will also be important and some approximation for the 
boundary operators will be developed.

\section*{Acknowledgements}
P\"O has been funded by the the SNF grant (Number 200021\_175784) %\todo{Maybe more}
and by the UZH Postdoc grant \rev(Number FK-19-104).
This research was initiated by a first visit of JN at UZH,
and really started during ST postdoc at UZH. This postdoc was funded by an SNF grant 200021\_153604. The Los Alamos unlimited release number is LA-UR-19-32410. \\
{P\"O  likes also  to thank Barbara Re (University of Zurich) for  the helpful discussions about the usage of Petsc. 
Finally, all authors  thank  the  anonymous referees whose comments and criticisms have improved the original version of the paper.} \remi{Finally, the authors thanks Professor E. Burman for useful comments.}

\bibliographystyle{unsrt}
\bibliography{literature}

\end{document}